\documentclass[10pt, letterpaper]{article}

\usepackage[pdftex]{graphicx,color}  
\usepackage{amsfonts, amssymb, amscd, amsmath, amsthm, fullpage, gensymb, hyperref}
\usepackage[lite]{amsrefs}
\usepackage{enumerate}
\usepackage{color, colortbl}

\definecolor{Gray}{gray}{0.85}

\newtheorem{theorem}{Theorem}[section]

\newtheorem{lemma}[theorem]{Lemma}

\theoremstyle{definition}
\newtheorem{definition}{Definition}[section]
\setlength{\parskip}{6pt}

\newcommand{\Hys}{\mathbb{H}^3}

\title{Hyperbolic triangular prisms with one ideal vertex}

\author{Grant S. Lakeland and Corinne G. Roth}

\begin{document}

\maketitle

\begin{abstract}In this paper, we classify all of the five-sided three-dimensional hyperbolic polyhedra with one ideal vertex, which have the shape of a triangular prism, and which give rise to a discrete reflection group. We show how to find each such polyhedron in the upper half-space model by considering lines and circles in the plane. Finally, we give matrix generators in $\mathrm{PSL}_2(\mathbb C)$ for the orientation-preserving subgroup of each corresponding reflection group.\end{abstract}

%%%%%%%%
\section{Introduction}\label{section:intro}
%%%%%%%%

A convex polyhedron in hyperbolic $3$-space $\Hys$ generates a discrete group of isometries if the dihedral angles at which its bounding planes meet are all integer submultiples of $\pi$ -- that is, each angle is of the form $\pi/n$ radians, for an integer $n \geq 2$ -- and if the dihedral angles satisfy some other combinatorial criteria. The set of all such polyhedra is infinite, with some partial classifications completed. For example, the fewest sides such a polyhedron may have is four, and the 32 hyperbolic tetrahedra were found by Lann\'{e}r \cite{lanner}, Vinberg \cite{vinberg}, and Thurston \cite{Thurston-notes}. The 825 smallest volume all-right-angled polyhedra have been found by Inoue \cite{inoue}.

For certain computations, it is helpful to know matrix generators in $\mathrm{PSL}_2(\mathbb C)$ for the orientation-preserving index 2 subgroups of these reflection groups. Such generators were found for some of the tetrahedral groups by Brunner, Lee, and Wielenberg \cite{brunneretal}, and the second author has shown how to find these for all 32 tetrahedra \cite{Lakeland}. These matrix generators have recently been used, for example, by Hoffman \cite{Hoffman} to study knot complements which cover tetrahedral orbifolds, and by \c Seng\"un \cite{Sengun} to study growth in torsion homology of subgroups of the tetrahedral groups.

In this paper, we perform the same calculations for one class of five-sided polyhedra, which when drawn schematically resemble triangular prisms. The set of all such polyhedra is infinite, and for simplicity we restrict our attention to such polyhedra which have one ideal vertex and five finite vertices. As such, all of our examples are non-compact. We find:
% that the set of all such polyhedra consists of twelve infinite families where one of the dihedral angles can be chosen arbitrarily, and 78 other specific arrangements of dihedral angles.

\begin{theorem}Up to relabeling by symmetry of the prism, there are 12 infinite families of prisms with exactly one ideal vertex, each of which has one dihedral angle which may be any integer submultiple of $\pi$ less than or equal to $\pi/6$. In addition, there are 78 specific arrangements, where in each case all dihedral angles not incident to the ideal vertex are at least $\pi/5$.  \end{theorem}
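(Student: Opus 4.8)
\section*{Proof proposal}

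The plan is to reduce the existence of such a polyhedron to a system of Diophantine conditions on the edge labels, solve that system, and read off the two parts of the count. Fix notation: a triangular prism has triangular faces $T$ and $B$, three quadrilateral faces, six trivalent vertices, and nine edges --- three on $T$, three on $B$, and three ``vertical'' edges $g_1,g_2,g_3$ joining $T$ to $B$ --- and we declare one vertex of $T$ to be ideal, with dihedral angle $\pi/n(e)$ along an edge $e$ for an integer $n(e)\geq 2$. First I would record the conditions under which this labeled prism is realized with finite volume in $\Hys$; by an Andreev-type analysis (taking care with the two features where the classical statement needs attention, namely ideal vertices and triangular prisms) these are: (a) at each finite vertex, the reciprocals of the three incident labels sum to strictly more than $1$; (b) at the ideal vertex they sum to exactly $1$, since the vertex link there is a Euclidean triangle; and (c) the reciprocals along the unique prismatic $3$-circuit $g_1,g_2,g_3$ sum to strictly less than $1$ --- one checks the prism has no prismatic $4$-circuit, so nothing further is imposed. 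These same conditions can be read straight off the upper half-space picture, in which the three faces at the ideal vertex become lines and the other two faces become circles, which is how the paper exhibits each polyhedron concretely.

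The organizing step is condition (b). In integers $\geq 2$ the equation $1/a+1/b+1/c=1$ has only the solutions $\{3,3,3\}$, $\{2,4,4\}$, and $\{2,3,6\}$, so the three edges at the ideal vertex --- two edges of $T$ and one vertical edge --- carry labels from one of these three multisets. Combined with the elementary fact that (a) forces every one of the five finite vertices to have an incident edge labeled $2$ and its second-smallest incident label to be at most $3$, this lets me propagate forced labels around the finite vertices and bound all but at most one label.

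Next I would isolate the infinite families. A label $n(e)$ can be arbitrarily large precisely when $e$ is not incident to the ideal vertex and, at each endpoint of $e$, the other two incident edges are both labeled $2$ (so (a) holds whatever $n(e)$ is), the remaining, upper-bound conditions being unaffected by enlarging $n(e)$. A short argument with (b) and (c) shows such a ``free'' edge must be one of the vertical edges --- it cannot lie on $T$ or $B$, since that would force two of $g_1,g_2,g_3$ to equal $2$ and violate (c) --- and that there is at most one of them. Enumerating the labelings of the other eight edges compatible with (a)--(c) and with realizability, and then identifying configurations related by the order-$2$ symmetry of the prism that fixes the ideal vertex, yields exactly $12$ configurations; in each, (c) bounds the free label from below by some integer that is at most $6$, so the corresponding dihedral angle may be taken to be any $\pi/n\leq\pi/6$.

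In the complementary case every edge not at the ideal vertex has label at most $5$, hence all nine labels lie in $\{2,3,4,5,6\}$ (with $6$ possible only on an edge at the ideal vertex). This is a finite search: run through the trichotomy on the ideal vertex, impose (a) at the five finite vertices and (c) on $g_1,g_2,g_3$, discard any labeling failing realizability, and reduce modulo the order-$2$ symmetry; the surviving arrangements number $78$. I expect the main obstacle to be not a single hard step but the exhaustive bookkeeping of the last two paragraphs --- applying the symmetry identifications consistently, overlooking no admissible labeling, and, above all, using the form of the realizability criterion that is genuinely valid for a triangular prism carrying an ideal vertex, this being exactly the situation in which Andreev's original statement requires the most care.
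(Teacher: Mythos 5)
Your overall strategy coincides with the paper's: impose the Andreev-type conditions (spherical links at the five finite vertices, a Euclidean link at the ideal vertex, a hyperbolic triple on the three vertical edges), split into the three cusp types $\{2,3,6\}$, $\{2,4,4\}$, $\{3,3,3\}$, locate the unbounded label on a vertical edge not meeting the ideal vertex, and finish by exhaustive enumeration up to the reflection symmetry of the prism. Your structural observations --- every spherical triple contains a $2$ with second-smallest entry at most $3$, a free edge cannot lie on either triangular face, and there is at most one free edge --- are all correct and are implicit in the paper's case analysis.

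Two gaps remain. First, you dispose of the remaining conditions of Andreev's theorem by noting only that the prism has no prismatic $4$-circuit; but the theorem also imposes condition (5), an inequality attached to each quadrilateral face, and the prism has three quadrilateral faces, so this condition genuinely applies. The paper explicitly argues that it is automatically satisfied (all dihedral angles are at most $\pi/2$, at most one of the vertical labels is $2$ by the hyperbolic condition, and at most one of $a_1,a_2$ is $2$ by the Euclidean condition); your argument needs the analogous check. Second, and more substantively for an enumeration theorem, the counts $12$ and $78$ are asserted as the output of a ``finite search'' that you do not carry out: the paper's proof of these numbers is precisely its three case-analysis lemmas ($8$ families and $32$ configurations for the $[2,3,6]$ cusp, $4$ and $24$ for $[2,4,4]$, $0$ and $22$ for $[3,3,3]$), and without that bookkeeping the statement is not established. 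A smaller inaccuracy: the lower bound that condition (c) places on the free label is not always at most $6$; four of the twelve families require $n\geq 7$, so in those families the angle $\pi/6$ is not attained (a looseness already present in the theorem's wording, but worth not reproducing in the proof).
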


We note that the list of polyhedra considered in this paper overlaps with those considered by Kaplinskaja \cite{Kaplinskaja}, who studied finite volume simplicial prisms in $\Hys$, $\mathbb{H}^4$ and $\mathbb{H}^5$ which give rise to discrete reflection groups, and listed their Coxeter graphs. Each of our prisms either appears there, or can be decomposed into two polyhedra which do. As such, our classification is not new, although we do not believe that our polyhedra have previously been listed in this way. The Coxeter graphs of these polyhedra do not immediately allow one to produce isometries which generate the orientation-preserving subgroup, and in this paper we provide a method for this.

The method used is the following. First, we use Andreev's Theorem to set up the combinatorial rules which the dihedral angles must satisfy, and we find all admissible arrangements of angles which satisfy these rules. Then, we reduce the problem of finding hyperbolic planes in the upper half-space $\Hys$ which meet at these prescribed angles to a similar problem, involving finding lines and circles in the plane which meet at the same angles. Finally, we use this geometric data to write down matrix generators for each group.

This paper is organized as follows. After some geometric preliminaries in Section 2, in Section 3 we describe all of the possible arrangements of angles which are possible for our prisms, grouped by the possible angles at the ideal vertex. We also outline a method to locate each prism precisely in $\Hys$. In Section 4 we describe a general method to find each prism and write down corresponding matrices which generate the orientation-preserving subgroup of isometries of $\Hys$, and in Section 5 we summarize the possible angle arrangements in tables.

{\bf Acknowledgments.} We wish to thank Neil Hoffman for suggesting this project, and for help checking the results, and Charles Delman for his guidance and support. We thank Matthieu Jacquemet and the referee for helpful comments.

%%%%%%%%
\section{Geometric preliminaries}\label{section:geom}
%%%%%%%%

In this section, we recall some definitions and results about hyperbolic polyhedra. We will work in the upper half-space model for $\Hys$, $\{ (x,y,z) \in \mathbb{R}^3 \mid z>0 \}$, and we recall that in this model, geodesic lines are vertical lines and semicircles which meet the plane $\{z=0\}$ perpendicularly, and geodesic planes are vertical planes and hemispheres whose equators lie in the plane $\{ z=0 \}$. 

We first note that in order for a polyhedron to generate a discrete reflection group, all of its dihedral angles must be integer submultiples of $\pi$ radians, and the integer must be no less than $2$. In this paper, we will label a dihedral angle of $\pi/n$ by the natural number $n$.

\begin{definition}\label{def:euclidean}A triangle is \emph{Euclidean} if its angles $p, q,$ and $r$ satisfy the equation
\[ \frac{\pi}{p} + \frac{\pi}{q} + \frac{\pi}{r} = \pi\]
or
\[ \frac{1}{p} + \frac{1}{q} + \frac{1}{r} = 1.\] \end{definition}

\begin{definition}\label{def:spherical}A triangle is \emph{spherical} if its angles $p, q,$ and $r$ satisfy the inequality
\[ \frac{1}{p} + \frac{1}{q} + \frac{1}{r} > 1.\] \end{definition}

\begin{definition}\label{def:hyperbolic}A triangle is \emph{hyperbolic} if its angles $p, q,$ and $r$ satisfy the inequality
\[ \frac{1}{p} + \frac{1}{q} + \frac{1}{r} < 1.\] \end{definition}

With these definitions in mind, we will appeal to Andreev's Theorem \cite{Andreev} for hyperbolic polyhedra, which specifies exactly what conditions a combinatorial arrangement of dihedral angles must satisfy in order that it give rise to a hyperbolic polyhedron. For the precise statement given below, we refer to Dunbar, Hubbard, and Roeder \cite{Roeder}.

\begin{theorem}[Andreev]\label{andreev} If P is  a  compact,  finite-sided  hyperbolic  polyhedron  with  dihedral angle  $\alpha_i$ at each edge $e_i$, then the following conditions hold:

\begin{enumerate}\item For each $i$, $\alpha_i > 0$;
\item If three edges $e_i, e_j, e_k$ meet at at a vertex, then $\alpha_i + \alpha_j + \alpha_k > \pi$;
\item If there exists a prismatic $3$-circuit intersecting $e_i$, $e_j$ and $e_k$, then $\alpha_i + \alpha_j + \alpha_k < \pi$;
\item If there exists a prismatic $4$-circuit intersecting $e_i$, $e_j$, $e_k$ and $e_l$, then $\alpha_i + \alpha_j + \alpha_k + \alpha_l < 2\pi$; and
\item For a quadrilateral face with edges enumerated successively $e_1$, $e_2$, $e_3$ and $e_4$, and $e_{12}$, $e_{23}$, $e_{34}$, and $e_{41}$ are such that $e_{12}$ is the third edge meeting at the vertex where $e_1$ and $e_2$ intersect (and similarly for other $e_{ij}$, then
\begin{enumerate}[(a)]\item $\alpha_1 + \alpha_3 + \alpha_{12} + \alpha_{23} + \alpha_{34} + \alpha_{41} < 3\pi$; and
\item $\alpha_2 + \alpha_4 + \alpha_{12} + \alpha_{23} + \alpha_{34} + \alpha_{41} < 3\pi$.
\end{enumerate}
\end{enumerate}
\end{theorem}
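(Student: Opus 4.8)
The statement claims only the \emph{necessity} of Andreev's conditions; the converse (existence, and uniqueness up to isometry) is the deep part of Andreev's theorem, and I would not attempt to reproduce it here. The plan is to read each of the five conditions off the configuration of the geodesic planes carrying the faces of $P$, working in the hyperboloid model: $\Hys$ sits inside a real $4$-dimensional vector space equipped with a symmetric bilinear form of signature $(3,1)$, each face of $P$ lies on a plane $n^{\perp}$ for a spacelike unit normal $n$ chosen to point into $P$, two planes meeting at dihedral angle $\alpha$ have normals with inner product $-\cos\alpha$, and two disjoint planes have normals with inner product $\le -1$ (equality exactly when they share an ideal point, which cannot happen for a compact $P$). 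The one structural input used repeatedly is that the Gram matrix of any finite set of vectors in this space has at most one negative eigenvalue.

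Conditions (1) and (2) are local. A dihedral angle is the opening angle of a nondegenerate wedge, hence strictly positive. For (2), the link of the vertex where $e_i,e_j,e_k$ meet (its space of directions) is a nondegenerate spherical triangle whose angles are exactly $\alpha_i,\alpha_j,\alpha_k$, and a spherical triangle has angle sum greater than $\pi$; equivalently, the three normals at the vertex span a positive-definite $3$-plane (its orthogonal complement is the timelike line through the vertex), and for the angles of a genuine spherical triangle positive-definiteness of the Gram matrix
\[ \begin{pmatrix} 1 & -\cos\alpha_k & -\cos\alpha_j \\ -\cos\alpha_k & 1 & -\cos\alpha_i \\ -\cos\alpha_j & -\cos\alpha_i & 1 \end{pmatrix} \]
is equivalent to $\alpha_i+\alpha_j+\alpha_k>\pi$.

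Condition (3) is where the real work is. Let $A,B,C$ be the faces of a prismatic $3$-circuit, with supporting planes pairwise meeting along edges $e_i,e_j,e_k$ of $P$ but with no vertex of $P$ lying on all three. The key claim is that the three planes then share no point of $\overline{\Hys}$. Part of this is easy: if $v$ were such a point, then $v$ lies on the geodesic line through $e_i$ and on the geodesic line through $e_j$, both contained in the plane of $A$, and since $A$ is a convex polygon the line through any of its edges meets $\overline{A}$ only in that edge, so $v\in\overline{A}$ would force $v\in e_i\cap e_j$ — a common vertex, contradicting the prismatic hypothesis. The remaining possibility, that $v$ lies outside all three faces, must be excluded using convexity of $P$ together with the fact that a prismatic circuit is a genuine circuit in the face-adjacency graph and not the link of a vertex. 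Once the three planes have no common point, the span $W$ of their normals has signature $(2,1)$, so $W^{\perp}$ is a spacelike line and the plane $\Pi_0$ perpendicular to that line is perpendicular to $A$, $B$, and $C$; cutting $\Pi_0\cong\Hyp$, the three planes trace out three geodesics that meet pairwise at the angles $\alpha_i,\alpha_j,\alpha_k$ but have no common point, hence bound a genuine hyperbolic triangle with those angles, and so $\alpha_i+\alpha_j+\alpha_k<\pi$.

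Condition (4) is handled in the same spirit, applied to the $4\times 4$ Gram matrix of the four supporting planes of a prismatic $4$-circuit, the two disjoint non-adjacent pairs contributing entries $\le -1$: if $\alpha_i+\alpha_j+\alpha_k+\alpha_l\ge 2\pi$ the Gram matrix is forced to have two negative eigenvalues, which is impossible in signature $(3,1)$; geometrically, the four planes would have to bound the $(\text{quadrilateral})\times\R$ region dictated by the combinatorics, but the resulting hyperbolic quadrilateral, having angle sum at least $2\pi$, does not exist. Finally, condition (5) follows from the fact that a quadrilateral face $Q$ is itself a compact convex hyperbolic quadrilateral, so its interior angles sum to less than $2\pi$; the interior angle of $Q$ at each vertex is expressed through the three dihedral angles there by the spherical law of cosines applied to the link triangle, and combining these relations with conditions (1)--(3) and passing to a linear consequence yields the inequalities (5a) and (5b). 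The step I expect to be the main obstacle is the one flagged in (3), and its analogue for (4): showing that the supporting planes of a prismatic circuit share no point of $\overline{\Hys}$, which requires a careful argument from the convexity of $P$ and the precise combinatorial meaning of a prismatic circuit, and which is also the place where one must be most careful about ideal points.
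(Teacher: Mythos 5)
The paper does not prove this statement: Andreev's Theorem is quoted as a known external result, with the proof deferred to Andreev's original article and to the exposition of Dunbar, Hubbard, and Roeder cited immediately before it. There is therefore no internal proof to compare yours against; what can be assessed is whether your sketch would stand on its own as the necessity half of the theorem.

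Your overall strategy --- reading each condition off the Gram matrix of inward unit normals in the Lorentzian model, using that signature $(3,1)$ forbids more than one negative eigenvalue --- is the standard and correct route, and your treatment of conditions (1) and (2) is essentially complete. But the proposal has a genuine gap exactly where you flag it, and it is the heart of the matter: for condition (3) you must show that the three supporting planes of a prismatic $3$-circuit have no common point in $\overline{\Hys}$, and your argument only excludes a common point that lies on (the closure of) one of the three faces. The remaining case, a common point of the three planes lying outside $P$ or at infinity, is precisely what distinguishes a prismatic circuit from the link of a vertex, and ``must be excluded using convexity of $P$'' states the goal rather than supplying the argument; this is where the published proofs do real combinatorial work. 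The same unestablished claim is needed again for condition (4), where you additionally assert without justification that the two non-adjacent supporting planes are disjoint with normal inner product at most $-1$; a priori they could intersect outside $P$, and ruling that out is part of the proof. Condition (5) is only gestured at: ``combining these relations and passing to a linear consequence'' does not identify which combination of the spherical-trigonometric identities at the four vertices yields (5a) and (5b), and this step is not routine. As written, the proposal is an accurate outline of the known argument with its hardest steps left open --- a reasonable thing to record for a cited classical theorem, but not a proof.
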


%We will consider non-compact polyhedra with one ideal vertex; the only change this makes to the conditions in the theorem is that at the ideal vertex, the three edges incident to the ideal vertex have angles which add up to $\pi$. DELETE?

The dihedral angle of intersection of two planes in the upper half-space model of $\Hys$ is the same as the angle between the respective tangent planes at any point of intersection. Since these planes are either vertical Euclidean planes or Euclidean spheres with center on the plane $\{ z=0 \}$, if two planes intersect, they have a common point on the plane $\{ z=0 \}$. In this case, the respective tangent planes are both vertical Euclidean planes, and so the dihedral angle is the angle the tangent planes make in the $x$-$y$ plane. 

With this in mind, we observe that the aim of finding five hyperbolic planes which intersect at prescribed angles may be reduced to finding five lines and circles in the $x$-$y$ plane which intersect at the same prescribed angles. Since three of our planes intersect at an ideal vertex, we may place this ideal vertex at $\infty$, and thereby assume that the three planes are vertical Euclidean planes. These will correspond to Euclidean lines in the $x$-$y$ plane. The remaining two sides will then correspond to circles in the $x$-$y$ plane; since angles of intersection are preserved by Euclidean similarities, we may take one of the circles to be the unit circle in the $x$-$y$ plane.

With these assumptions about our setup in place, when finding lines and circles which intersect at given angles, we will appeal frequently to the following results.

\begin{lemma}\label{lem:thetaline}  A line that intersects a circle of radius $r$ at angle $\theta$ comes distance \textit{$r\cos(\theta)$} away from the center of the circle at its closest point.\end{lemma}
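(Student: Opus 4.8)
The plan is to reduce the statement to elementary Euclidean trigonometry inside a single right triangle. Let the circle have center $O$ and radius $r$, and suppose the line $\ell$ meets it at a point $P$, so that $|OP| = r$. Let $F$ be the foot of the perpendicular dropped from $O$ to $\ell$; then $|OF|$ is precisely the distance from $O$ to the closest point of $\ell$, which is the quantity to be computed. If $\ell$ passes through $O$ the claim is immediate (the intersection is then orthogonal, $\theta = \pi/2$, and $|OF| = 0 = r\cos(\pi/2)$), so assume $F \neq O$; then $P \neq F$ as well, and $O$, $F$, $P$ are the vertices of a right triangle with the right angle at $F$ and hypotenuse $OP$ of length $r$.

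The key geometric input is the relationship between $\theta$ and the angle $\angle FOP$ of this triangle. By definition, the angle at which $\ell$ meets the circle is the angle between $\ell$ and the tangent line to the circle at $P$; since this tangent is perpendicular to the radius $OP$, the line $\ell$ meets the line $OP$ at angle $\pi/2 - \theta$. That angle is exactly $\angle FPO$ (the angle at $P$ between the segment $PF \subset \ell$ and the segment $PO$), because the triangle $OFP$ is right-angled at $F$ and so its angle at $P$ is acute, forcing the value $\pi/2 - \theta$ rather than $\pi/2 + \theta$. Since the angles of $OFP$ sum to $\pi$, we get $\angle FOP = \pi/2 - \angle FPO = \theta$, and reading the cosine of this angle off the right triangle gives $|OF| = |OP|\cos(\angle FOP) = r\cos\theta$, as claimed.

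The only point requiring care is the bookkeeping of angles: one must check that the angle of intersection with the circle, conventionally measured against the tangent at $P$, corresponds to the \emph{complementary} angle measured against the radius, and hence to the angle $\angle FOP$ rather than to $\angle FPO$ or to a supplement of one of them. This is harmless in our setting because every dihedral angle under consideration has the form $\pi/n$ with $n \geq 2$, so $\theta \in (0, \pi/2]$ and $\cos\theta \geq 0$, leaving no sign ambiguity; moreover, if $\ell$ meets the circle in two points it meets it at the same angle at both (by the reflective symmetry across the line $OF$), so the choice of $P$ is irrelevant. A small figure displaying $O$, $P$, $F$, the radius $OP$, and the tangent at $P$ makes the entire argument transparent, and I would include one.
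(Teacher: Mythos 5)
Your argument is correct and is essentially the paper's own proof: the right triangle $OFP$ you construct is exactly the triangle with vertices $(0,0)$, $(a,0)$, $(a,\sqrt{r^2-a^2})$ used in the paper (after its normalization to a vertical line and a circle centered at the origin), and the angle chase via the tangent--radius perpendicularity is identical. Your coordinate-free phrasing and the explicit handling of the degenerate case $F=O$ are fine refinements, but they do not change the substance of the argument.
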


\begin{figure}[htb]
\begin{center}
\includegraphics[scale=0.45]{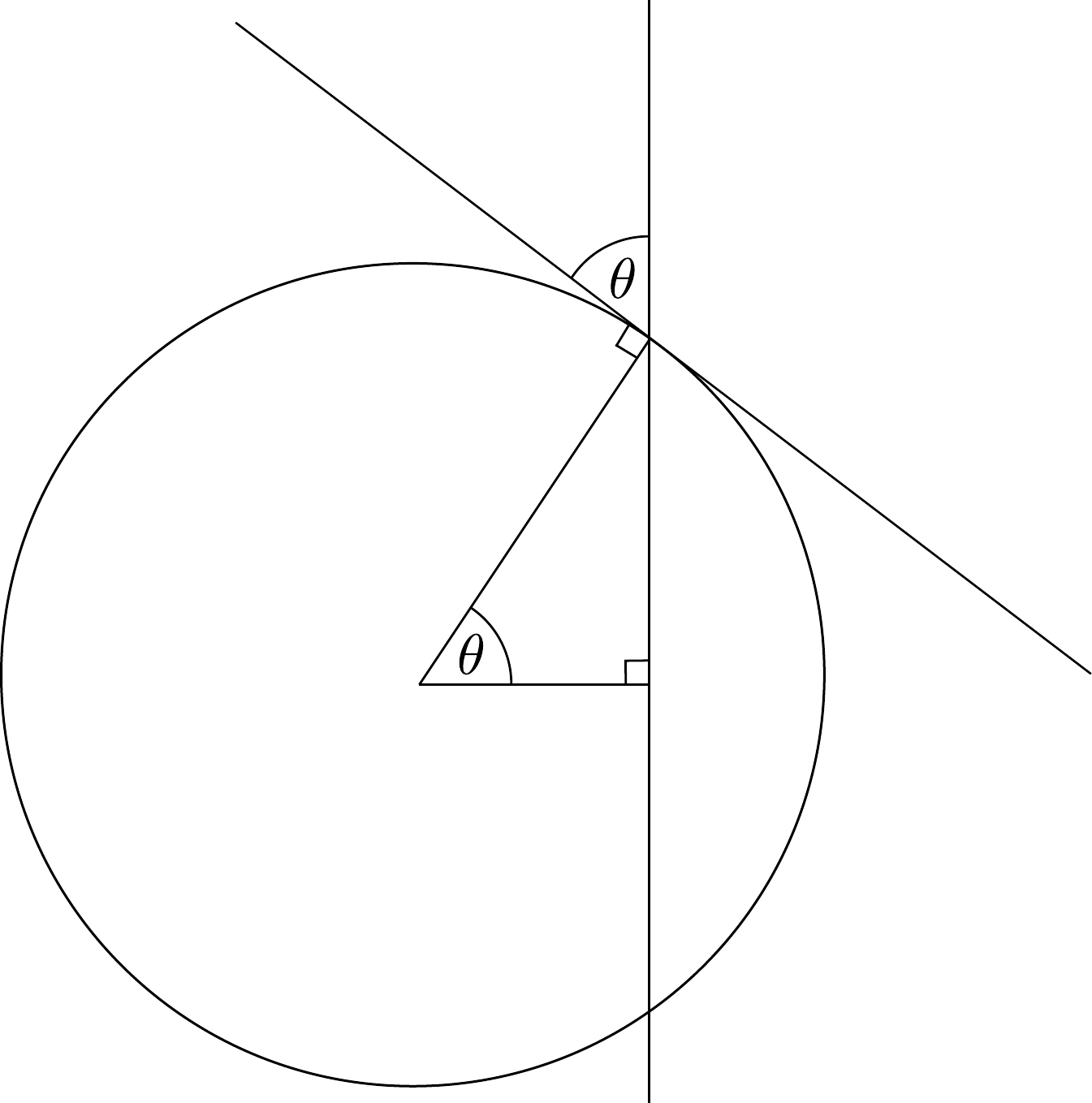}
\caption{The vertical line is $x=r\cos{( \theta )}$}
\label{fig:thetaangle}
\end{center}
\end{figure}

\begin{proof}After translating and rotating if necessary, we may assume that the line is vertical, and further that it is of the form $x=a$ for $a\geq0$, and that the circle is centered at the origin. The intersection points are then $(a, \sqrt{r-a^2})$ and $(a, -\sqrt{r-a^2})$; we focus on the former. The angle $\theta$ is the angle at which the line $x=a$ and the tangent line to the circle at this point intersect (see Figure \ref{fig:thetaangle}). Since the radius of the unit circle meets the tangent line at a right angle, the triangle with vertices at $(0,0)$, $(a,0)$ and $(a, \sqrt{r-a^2})$ has an angle of $\frac{\pi}{2} - \theta$ at $(a, \sqrt{r-a^2})$. This triangle has a right angle at $(a,0)$, and so must have angle $\theta$ at $(0,0)$. Since the hypotenuse of the triangle is a radius of the unit circle, it follows that $a=\cos{( \theta )}$.\end{proof}

When finding the precise location of the prism in the upper half-space, we will need to find three lines (corresponding to vertical planes which ``meet" at the ideal vertex) and two circles (corresponding to non-vertical planes). We will take one circle to be the unit circle and then find the three lines based on that. One of the lines will be taken to be $x=c$ for some constant $c$; the other two lines will have the form $y=mx+b$, with slope $m$ and $y$-intercept $b$. In order to find the second circle, we will find three equations, corresponding to the three angles at which that plane meets three other planes, in the three unknowns $(x_0,y_0)$ and $r$, representing, respectively, the coordinates of the center, and the radius, of the second circle. This circle will meet the unit circle and two lines; in some circumstances, we will need an equation coming from the fact that the second circle meets a given line $y=mx+b$ at a prescribed angle $\phi$.

\begin{lemma}\label{lem:intersectline}If a circle with center $(x_0,y_0)$ and radius $r$ meets the line $y=mx+b$ at angle $\phi$, and $(x_0, y_0)$ lies on or above the line (that is, $y_0 \geq mx_0 + b$), then $x_0,y_0$ and $r$ satisfy the equation
\[y_0 - \frac{r\cos{( \phi )} }{\sqrt{m^2+1}} = m\left( x_0 + \frac{mr\cos{( \phi )}}{\sqrt{m^2+1}} \right) +b.\]   \end{lemma}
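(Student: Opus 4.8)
The plan is to combine Lemma \ref{lem:thetaline} with the observation that the point of the line $y=mx+b$ closest to the center $(x_0,y_0)$ is the foot of the perpendicular dropped from the center to the line. Lemma \ref{lem:thetaline} tells us this closest point sits at distance $r\cos(\phi)$ from $(x_0,y_0)$, so it suffices to write down the foot of the perpendicular explicitly and then substitute its coordinates into the equation of the line.

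First I would record that the line has direction vector $(1,m)$, so the two unit normals to the line are $\pm\frac{1}{\sqrt{m^2+1}}(m,-1)$. The foot of the perpendicular is obtained by translating $(x_0,y_0)$ a distance $r\cos(\phi)$ along whichever of these normals points from the center toward the line. This is exactly where the hypothesis that $(x_0,y_0)$ lies on or above the line -- i.e. $y_0 \geq mx_0+b$ -- is used: the quantity $y-mx-b$ is nonnegative at the center and zero on the line, and moving in the direction $(m,-1)$ decreases $y-mx-b$ (the rate of change is $-(m^2+1)$), so $(m,-1)$ is the correct direction toward the line. Hence the foot of the perpendicular is
\[
\left( x_0 + \frac{mr\cos(\phi)}{\sqrt{m^2+1}},\ \ y_0 - \frac{r\cos(\phi)}{\sqrt{m^2+1}} \right).
\]

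Substituting this point into $y = mx + b$ then gives precisely the displayed equation, completing the argument. The only real subtlety is the choice of sign of the unit normal, and that is exactly what the ``on or above'' hypothesis resolves; the remaining manipulations are routine. One could instead start from the standard point-to-line distance formula $|y_0 - mx_0 - b|/\sqrt{m^2+1} = r\cos(\phi)$ and use collinearity of the center, the foot, and the normal direction, but tracking the foot of the perpendicular directly makes the role of the hypothesis most transparent.
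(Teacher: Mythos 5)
Your proposal is correct and follows essentially the same route as the paper's proof: translate the center a distance $r\cos(\phi)$ along the unit normal $\frac{1}{\sqrt{m^2+1}}\langle m,-1\rangle$ to land on the line, then substitute the resulting point into $y=mx+b$. If anything you are slightly more careful than the paper, which asserts the choice of normal direction ``by trigonometry'' with reference to a figure, whereas you justify it explicitly from the hypothesis $y_0 \geq mx_0 + b$ by noting that moving along $\langle m,-1\rangle$ decreases $y-mx-b$.
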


\begin{figure}[htb]
\begin{center}
\includegraphics[scale=0.55]{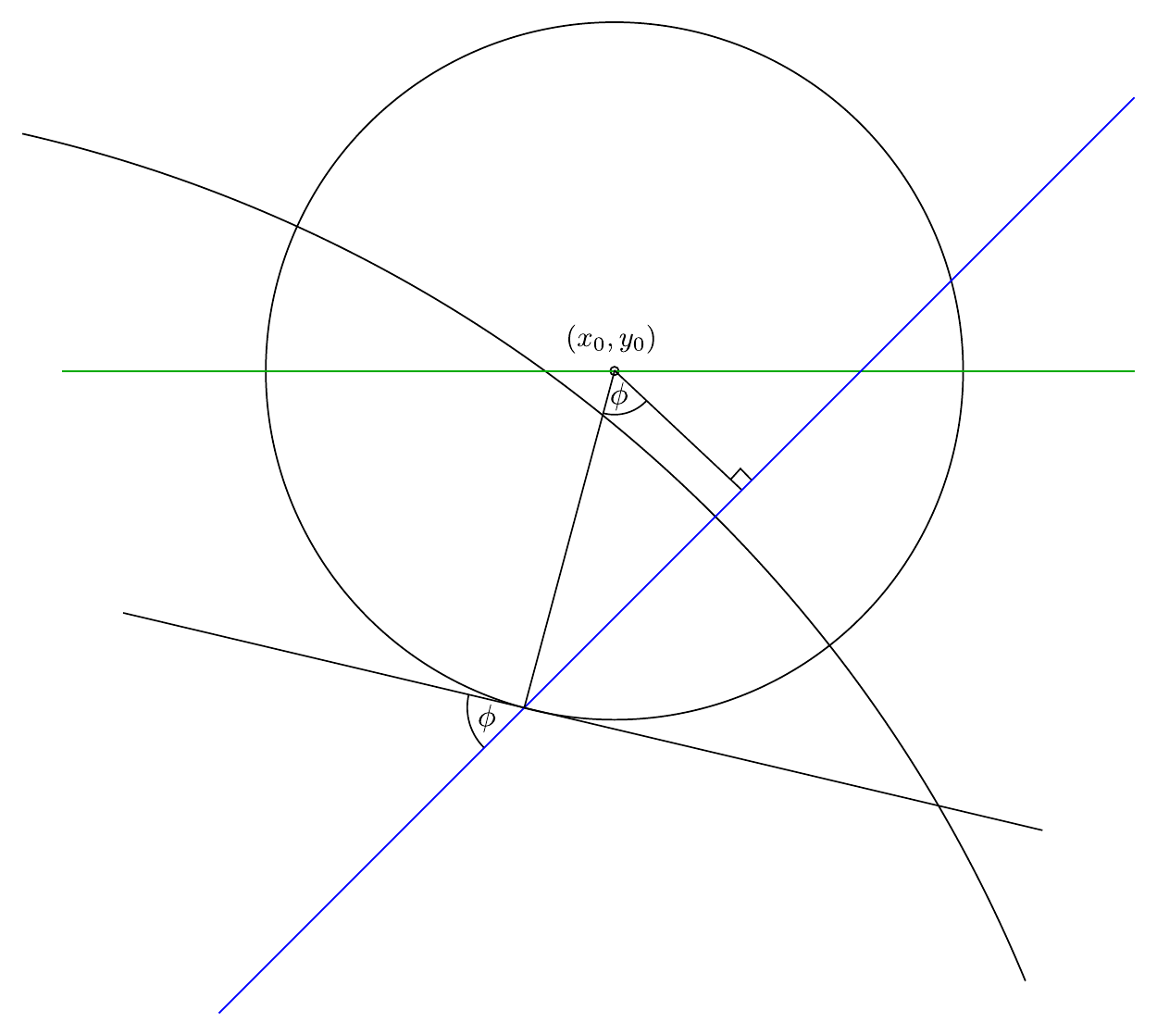}
\caption{The circle has center $(x_0, y_0)$ and radius $r$; the other arc is part of the unit circle}
\label{fig:lastcircle}
\end{center}
\end{figure}

\begin{proof}The vector $\langle 1, m \rangle$ is parallel to the line, so the vector $\langle m, -1 \rangle$ is perpendicular to the line. The unit vector parallel to this is
\[ \left\langle \frac{m}{\sqrt{m^2+1}}  , \frac{-1}{\sqrt{m^2+1}}. \right\rangle \]
By trigonometry (see Figure \ref{fig:lastcircle}), we see that the point which is distance $r\cos{( \phi )}$ away from $(x_0, y_0)$ in the direction of this vector lies on the line. Therefore, plugging the $x$- and $y$-coordinates of the vector
\[ \left\langle x_0 + r\cos{( \phi )}\frac{m}{\sqrt{m^2+1}}, y_0 - r\cos{( \phi )}\frac{1}{\sqrt{m^2+1}}  \right\rangle \]
into the formula $y=mx+b$ yields the required equation.\end{proof}

We will obtain another of our three equations by using the angle $\phi$ at which the second circle meets the unit circle.

\begin{lemma}\label{lem:cosines}If a circle with center $(x,y)$ and radius $r$ meets the unit circle at angle $\phi$, then $x,y$, and $r$ satisfy the equation
\[ x^2+y^2 = 1 + r^2 + 2\cos{( \phi )}r. \]\end{lemma}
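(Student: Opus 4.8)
The plan is to recognize the asserted identity as nothing more than the law of cosines applied to the triangle whose vertices are the two centers and a point where the circles cross. Write $O=(0,0)$ for the center of the unit circle and $C=(x,y)$, so that $|OC|^2 = x^2+y^2$, and let $P$ be one of the two intersection points, so $|OP| = 1$ and $|PC| = r$. The one piece of geometry needed is that the tangent line to a circle at a point is perpendicular to the radius drawn to that point; applying this at $P$ to each of the two circles shows that the angle between the two tangent lines at $P$ is equal, up to taking a supplement, to the angle $\angle OPC$ of the triangle. Fixing the convention (for instance by checking the orthogonal case $\phi=\pi/2$, where the claimed formula reduces to $x^2+y^2 = 1+r^2$, the classical condition for orthogonal circles, or the tangency cases), one gets $\angle OPC = \pi - \phi$, hence $\cos(\angle OPC) = -\cos(\phi)$.

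With this in hand the computation is immediate. Applying the law of cosines to triangle $OPC$ gives
\[ x^2+y^2 \;=\; |OC|^2 \;=\; |OP|^2 + |PC|^2 - 2\,|OP|\,|PC|\cos(\angle OPC) \;=\; 1 + r^2 - 2r\cos(\angle OPC), \]
and substituting $\cos(\angle OPC) = -\cos(\phi)$ yields
\[ x^2 + y^2 \;=\; 1 + r^2 + 2\cos(\phi)\,r, \]
which is exactly the desired equation.

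The only delicate point, and therefore the main obstacle, is the bookkeeping for the angle at $P$: one must be sure that the angle of intersection $\phi$, measured consistently with the rest of the paper (so that a dihedral angle $\pi/n$ corresponds to circles meeting at a shallow angle), is the \emph{supplement} of the interior angle $\angle OPC$ of the triangle rather than that angle itself. A clean way to sidestep the figure-dependence is a direct vector argument: setting $\vec a = O - P$ and $\vec b = C - P$, one has $|OC|^2 = |\vec b - \vec a|^2 = |\vec a|^2 + |\vec b|^2 - 2\,\vec a\cdot\vec b = 1 + r^2 - 2\,\vec a\cdot\vec b$, and the perpendicularity of each radius to its tangent at $P$ identifies $\vec a \cdot \vec b$ with $-r\cos(\phi)$, giving the result without needing to reason about which of two supplementary angles is meant.
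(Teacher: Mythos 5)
Your proof is correct and is essentially the paper's own argument: both apply the Law of Cosines to the triangle with vertices $(0,0)$, $(x,y)$, and an intersection point, identify the angle at that point as $\pi-\phi$ via the perpendicularity of each radius to its tangent, and use $\cos(\pi-\phi)=-\cos(\phi)$. The concluding vector reformulation is a nice bonus but does not change the substance.
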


\begin{proof} This is an application of the Cosine Law to the triangle whose vertices are at $(0,0)$, $(x,y)$, and one of the points where the circles intersect (see Figure \ref{fig:cosines}). This triangle has side lengths, $1$, $r$ and $\sqrt{x^2+y^2}$. The angle opposite the latter side is $\pi - \phi$ because the angle between the respective tangent lines at this vertex is $\phi$, and the two angles between the tangent lines and their respective radii are both $\pi/2$. The equation follows from the fact that $\cos{(\pi - \phi)} = -\cos{(\phi)}$. \end{proof}

\begin{figure}[htb]
\begin{center}
\includegraphics[scale=0.55]{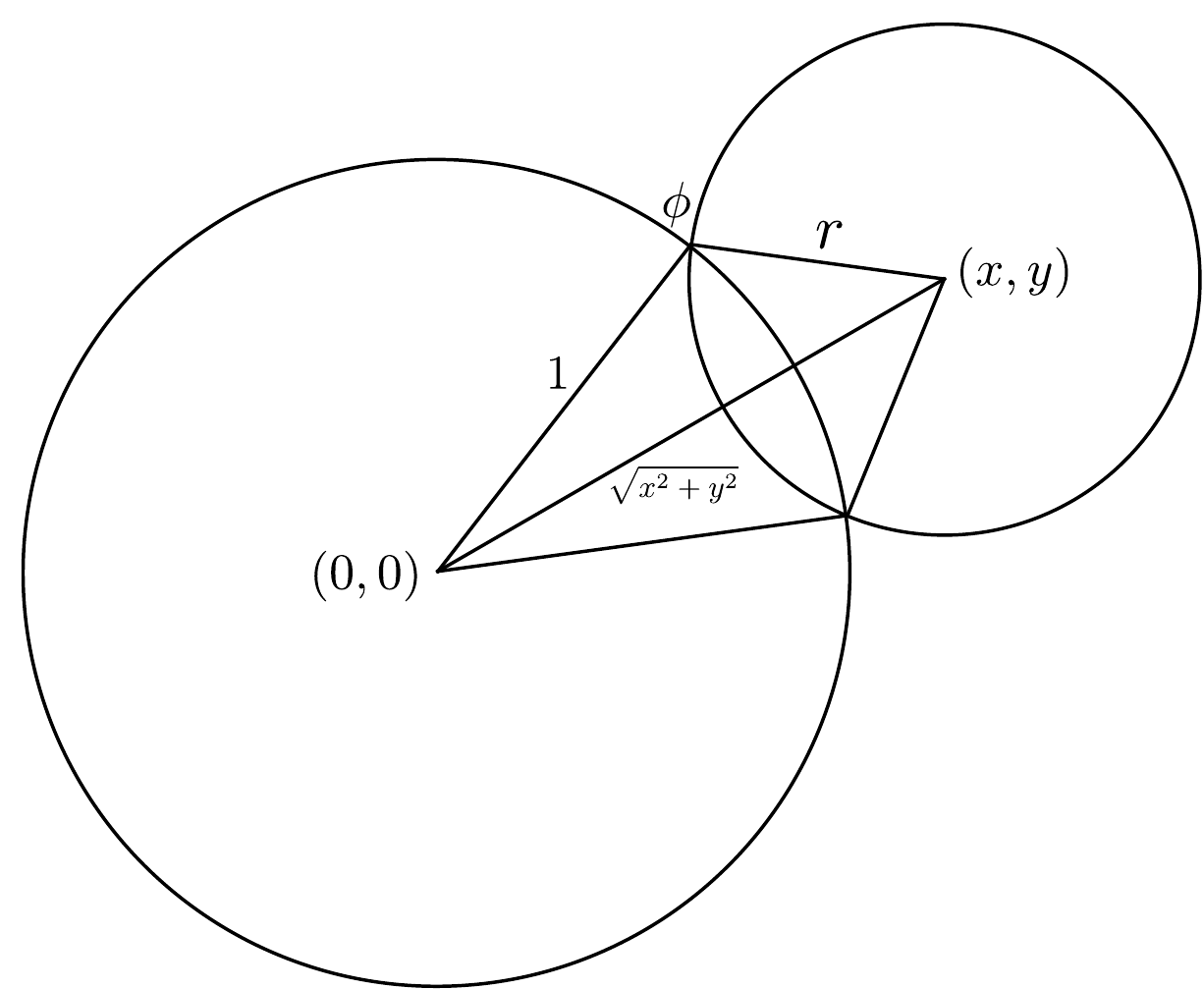}
\caption{The circles meet at angle $\phi$}
\label{fig:cosines}
\end{center}
\end{figure}

%%%%%%%%
\section{Prisms}\label{section:prisms}
%%%%%%%%

In this section, we will describe the dihedral angles of all hyperbolic triangular prisms with one ideal vertex, and a way to construct each prism in the upper half-space model of $\Hys$. There, the dihedral angles will be the angles at which the vertical and hemispherical geodesic planes intersect. We will find these planes by considering the lines and circles where they intersect with the plane $\{ (x,y,z) \mid  z=0 \}$, where we find lines and circles which must intersect at the same prescribed angles.

Such a prism is specified by nine positive integers, which we will denote as $a_1$ through $a_9$, corresponding to dihedral angles $\pi/a_i$. We label the prism as in Figure \ref{fig:LabeledPrism}.

\begin{figure}[htb]
\begin{center}
\includegraphics[scale=0.6]{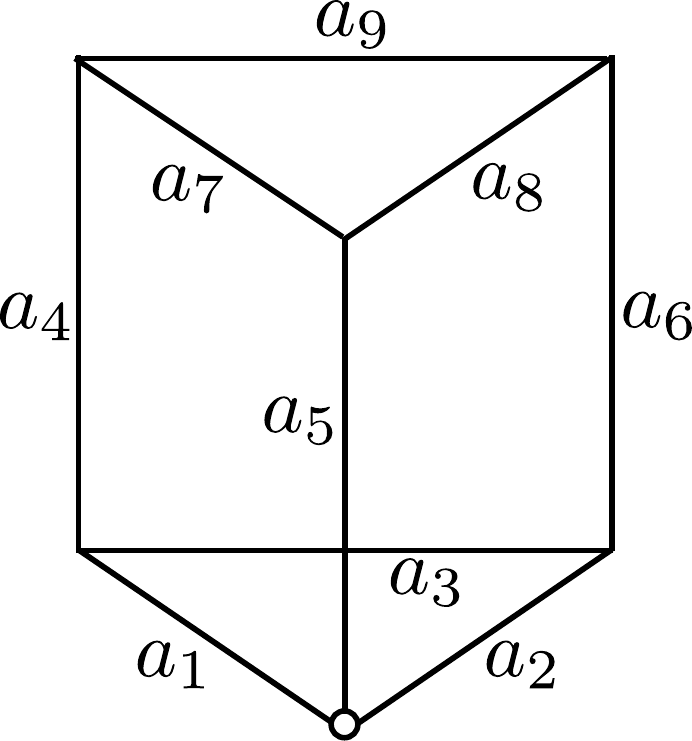}
\caption{The labels $a_1$ through $a_9$}
\label{fig:LabeledPrism}
\end{center}
\end{figure}

We note that due to the reflectional symmetry of the prism, once we have treated one prism, we have also treated the prism one obtains by exchanging the pairs $a_1$ and $a_2$, $a_4$ and $a_6$, and $a_7$ and $a_8$.

There are restrictions on the combinations of values taken by the labels $a_i$ which correspond to the conditions given in Theorem \ref{andreev}. Specifically:
\begin{itemize}\item condition 1 of Theorem \ref{andreev} means that all labels must be positive (i.e. not $0$ or $\infty$; we assume this of the $a_i$); 
\item condition 2 states that at the five non-ideal vertices, the three edges incident to the vertex must have the labels of a spherical triangle, and we add here that the three edges incident to the ideal vertex must have the labels of a Euclidean triangle; and
\item condition 3 states that labels $a_4$, $a_5$ and $a_6$ must be the labels of a hyperbolic triangle.
\end{itemize}
We disregard the other two conditions: condition $4$ does not apply because the prism has no prismatic $4$-circuits; and any labeling of the prism which meets the stated conditions will already meet condition $5$. This is because all dihedral angles in question are at most $\pi/2$, at most one of $a_4$, $a_5$ and $a_6$ can be $2$ (the others must be larger) from condition 3, and at most one of $a_1$ and $a_2$ may be $2$ from the ideal vertex condition.

%The prisms fall into three categories, and we will handle each separately. These correspond to the angles at the ideal vertex. These labels $[ a_1, a_2, a_5 ]$ must be the labels of a Euclidean triangle, so they are either $[ 2,3,6 ]$, $[ 2,4,4 ]$, or $[ 3,3,3 ]$. For each labeling, we will find three straight lines, and two circles, in the plane, which meet at the prescribed angles.
%
%The three possible arrangements are where $p, q,$ and $r$ of the Euclidean vertex are [2, 3, 6], [2, 4, 4], and [3, 3, 3]. In each case, we will first describe all of the possible labelings, and then find equations for the circles and lines in a few cases. All of the labelings we find are listed in Section 5.

\subsection{[2, 3, 6] Cusp}  

\begin{lemma}There are eight infinite familes and 32 specific configurations with labels 2, 3, and 6 at the ideal vertex.\end{lemma}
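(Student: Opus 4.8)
The plan is to enumerate all integer label vectors $(a_1,\dots,a_9)$ allowed by the constraints extracted above from Andreev's Theorem~\ref{andreev}: a spherical triple at each of the five finite vertices, a Euclidean triple $\{2,3,6\}$ at the ideal vertex, and a hyperbolic triple on $a_4,a_5,a_6$. Referring to Figure~\ref{fig:LabeledPrism}, the ideal vertex is the one incident to the edges $a_1$, $a_2$, $a_5$, so $\{a_1,a_2,a_5\}=\{2,3,6\}$; the remaining symmetry of the prism is the reflection interchanging $a_1\leftrightarrow a_2$, $a_4\leftrightarrow a_6$, $a_7\leftrightarrow a_8$, which I would hold in reserve to identify equivalent configurations. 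This leaves three cases, according to which of $2,3,6$ is carried by the vertical edge $a_5$; in each, the unknowns $a_3,a_4,a_6,a_7,a_8,a_9$ are to be pinned down by the spherical conditions at the vertices $A=\{a_1,a_3,a_4\}$, $B=\{a_2,a_3,a_6\}$, $A'=\{a_7,a_9,a_4\}$, $B'=\{a_8,a_9,a_6\}$, $C'=\{a_7,a_8,a_5\}$ together with $1/a_4+1/a_5+1/a_6<1$.

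The starting point for the infinite families is the fact that $\{2,2,n\}$ is the only spherical triangle with an unbounded entry, so a label can be the parameter of an infinite family only if the two other labels at \emph{each} endpoint of its edge are both $2$. This excludes $a_1,a_2,a_5$ (one endpoint of each is the ideal vertex, with triple $\{2,3,6\}$), and it excludes $a_3,a_7,a_8,a_9$, since forcing both endpoints of any of these to be of type $(2,2,\cdot)$ would make two of $a_4,a_5,a_6$ equal to $2$, against the hyperbolic condition. Only $a_4$ and $a_6$ remain, and these are swapped by the reflection, so I would look for families with parameter $a_4$. Requiring $A$ and $A'$ to be of type $(2,2,a_4)$ forces $a_1=a_3=a_7=a_9=2$, and hence $a_5\in\{3,6\}$ — in particular $a_5=2$ yields no infinite family. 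The two still-unused vertex conditions, at $B$ and $C'$, then fix one of $a_6,a_8$ and cap the other at a small value, while the hyperbolic inequality records the range of $a_4$. Working out the cases $a_5=6$ and $a_5=3$ should give four families apiece, i.e.\ the eight infinite families up to the reflection.

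For the $32$ specific configurations I would go back to the three cases and solve the same system with no label allowed to run. Any vertex already carrying a $3$ or a $6$ from the cusp sharply restricts its neighbours (e.g.\ $\{3,2,x\}$ spherical forces $x\le5$, and $\{6,2,x\}$ forces $x=2$), so the case tree stays small, and the Euclidean condition at the ideal vertex and the hyperbolic condition on $a_4,a_5,a_6$ do most of the pruning. I would collect the solutions into a table ordered by the value of $a_5$, discard the duplicates related by the reflection, and record that Andreev's conditions $4$ and $5$ require no separate check here, for the reasons already noted (no prismatic $4$-circuit; and condition $5$ holds automatically once the rest do).

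The main obstacle is bookkeeping rather than any single hard step: the search must be at once complete and non-redundant. The two delicate points are (a) handling the reflection consistently, so that no configuration or family is either missed or counted twice, and (b) fixing a firm convention for where an infinite family ends and the finite list begins — for instance recording each family as ``$a_4\ge6$'' and listing the admissible small values $a_4\in\{2,3,4,5\}$, where they occur, among the specific configurations — so that the totals $8$ and $32$ come out consistently.
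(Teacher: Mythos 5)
Your setup matches the paper's exactly: the same three Andreev-derived constraints (spherical triples at the five finite vertices, the Euclidean triple $\{2,3,6\}$ at the cusp, the hyperbolic condition on $a_4,a_5,a_6$), the same vertex--edge incidences, and the same case split on which of $2,3,6$ is carried by $a_5$. Your a priori argument that only $a_4$ and $a_6$ can parametrize infinite families --- because $\{2,2,n\}$ is the only spherical triple with an unbounded entry, and demanding $(2,2,\cdot)$ at both endpoints of any other edge either collides with the cusp triple or forces two of $a_4,a_5,a_6$ to equal $2$ --- is a nice structural observation that the paper uses only implicitly, and it correctly localizes the families to $a_4$ (up to the reflection) with $a_1=a_3=a_7=a_9=2$ and $a_5\in\{3,6\}$.

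The gap is that the lemma \emph{is} the two counts, and your proposal stops at the point where they would be produced. ``Working out the cases $a_5=6$ and $a_5=3$ should give four families apiece'' and ``I would go back to the three cases and solve the same system'' describe the computation without performing it; nothing in the write-up certifies that the answer is $8$ and $32$ rather than some nearby number, and the delicate points you yourself flag --- the reflection quotient and the boundary between a family and its small sporadic values --- are exactly where an off-by-a-few error would enter. For instance, for $a_5=6$, $a_6=2$ the hyperbolic condition alone allows $a_4\ge4$, but the family is recorded as $a_4\ge6$ because for $a_4\in\{4,5\}$ the vertex $\{a_4,a_7,a_9\}$ also admits $a_9=3$, which contributes extra sporadic configurations; your convention (b) must be applied case by case to get $28$ specifics with $a_3=2$ plus $4$ with $a_3=3$. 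To close the argument you need to actually run the finite search you describe: show $a_5=2$ is empty (the $6$ at the cusp forces a $2$ onto $a_4$ or $a_6$, killing the hyperbolic triple), enumerate $a_5=3$ (four families, no specifics), and enumerate $a_5=6$ over $a_3\in\{2,3\}$ and $a_6\in\{2,3,4,5\}$, tallying the admissible $(a_4,a_9)$ in each branch. The method is right; the proof is the tally.
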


\begin{proof}Here $a_1$, $a_2$ and $a_5$ take the values $2, 3$ and $6$. We first note that $a_5 \neq 2$. This is because whichever of $a_4$ or $a_6$ labels an edge which meets the edge labeled $6$ must take the value $2$, and then $a_4$, $a_5$ and $a_6$ are not the labels of a hyperbolic triangle.

Let $a_5 = 3$. Then, by symmetry, without loss of generality we suppose $a_1=2$ and $a_2=6$. Then $a_3 = a_6=2$. Since $a_6=2$ and $a_5=3$, we must have $a_4 \geq 7$, and thus that $a_7 = a_9 = 2$. The remaining label $a_8$ may take the values $2, 3, 4$ or $5$. Each of these 4 cases gives us one infinite family of labelings, indexed by $n \geq 7$ which corresponds to the value of $a_4$.

Now suppose $a_5=6$, and without loss of generality $a_1=2$ and $a_2=3$. Since $a_5 = 6$, we must have $a_7=a_8=2$. If $a_3 = 4$ then we must have $a_6 = 2$ and $a_4 \leq 3$, in which case $a_4$, $a_5$ and $a_6$ are not the labels of a hyperbolic triangle. A similar argument applies if $a_3 > 4$. If $a_3 = 3$, then we must have $a_6 = 2$, and then $a_4$ must be $4$ or $5$. In each of these two cases, $a_9$ may be $2$ or $3$. 

Finally, if again $a_5=6$, $a_1=2$, $a_2=3$, and $a_7=a_8=2$, it remains to consider $a_3 = 2$. If $a_6 = 2$, then $a_4$ could be 4 or 5 -- in each case $a_9$ is either $2$ or $3$ -- or $a_4 \geq 6$, in which case $a_9 = 2$. If $a_6 = 3$, then $a_4$ could be 3, 4 or 5 -- if $a_4 = 3$, $a_9$ is 2, 3, 4 or 5; if $a_4$ is 4 or 5, $a_9$ is $2$ or $3$ -- or $a_4 \geq 6$, in which case $a_9 = 2$. If $a_6 = 4$, then $a_4$ could be 2, 3, 4 or 5 -- in each case $a_9$ is either $2$ or $3$ -- or $a_4 \geq 6$, in which case $a_9 = 2$. If $a_6 = 5$, then $a_4$ could be 2, 3, 4 or 5 -- in each case $a_9$ is either $2$ or $3$ -- or $a_4 \geq 6$, in which case $a_9 = 2$.\end{proof}

We will compute explicitly the location of the prism in upper half-space for one label arrangement, where $a_1=2$, $a_2=6$, and $a_5=3$. Let $a_3=2$, $a_4=7$, $a_6=2$, $a_7=2$, $a_8=3$, and $a_9=2$. This is shown in Figure \ref{fig:TriangularPrism}.
\begin{figure}[htb]
\begin{center}
\includegraphics[width=0.2\linewidth]{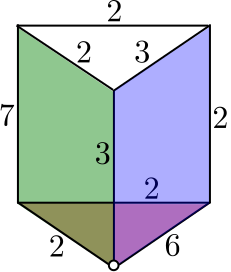}
\caption{The left quadrilateral face is green; the right is blue; and the lower triangular face is red}
\label{fig:TriangularPrism}
\end{center}
\end{figure}
The values for all the edges were specifically chosen to satisfy definitions \ref{def:euclidean}, \ref{def:spherical}, and \ref{def:hyperbolic}.  Using these definitions, we can examine different arrangements of the Euclidean vertex with values of 2, 3, and 6. 

%All the values represent angle measurements created by the intersections of the triangular prism with spheres.  The 2 indicates that the green plane intersects the unit sphere, which becomes the unit circle, at $\pi/2$.  The 6 means that the  intersects the unit circle at $\pi/6$. And the 3 means that $a_5$ intersects the circle at $\pi/3$. Therefore, the angle in between the intersection of those two lines is $\frac{\pi}{3}$.
%\begin{figure}[h]
%\begin{center}
%\includegraphics[width=0.2\linewidth]{../../../Desktop/Circle}
%\label{fig:Circle}
%\end{center}
%\end{figure}

We will refer to the left quadrilateral face, whose edges have labels $a_1$, $a_4$, $a_7$ and $a_5$, as the red face. The left quadrilateral face, with edges $a_2$, $a_6$, $a_9$ and $a_5$, will be the blue face. The lower triangular face, with edges $a_1$, $a_2$ and $a_3$, will be the red face. These faces all meet at the ideal vertex, and hence correspond to vertical lines in the horizontal plane.

Making the back quadrilateral face correspond to the unit circle, we choose the red face to correspond to the line $x=0$, which meets the unit circle at a right angle. By Lemma \ref{lem:thetaline}, the green face is the line $y=\cos{(\pi / 7 )}$, which meets the red face at a right angle, and the unit circle at $\pi /7$. The blue face is the line $y=\sqrt{3}x$, which meets the unit circle at a right angle and the red face at angle $\pi/6$  (see Figure \ref{fig:lines1}).

%Setting the purple line and blue line equal to each other, the point of intersection is $(1,\sqrt3)$, which means that the equation of that intersection is $y = \sqrt{3}x.$  The equation of the blue line is $y=\cos(\frac{\pi}{7})$ by Lemma \ref{lem:thetaline}.

\begin{figure}[htb]
\begin{center}
\includegraphics[scale=0.4]{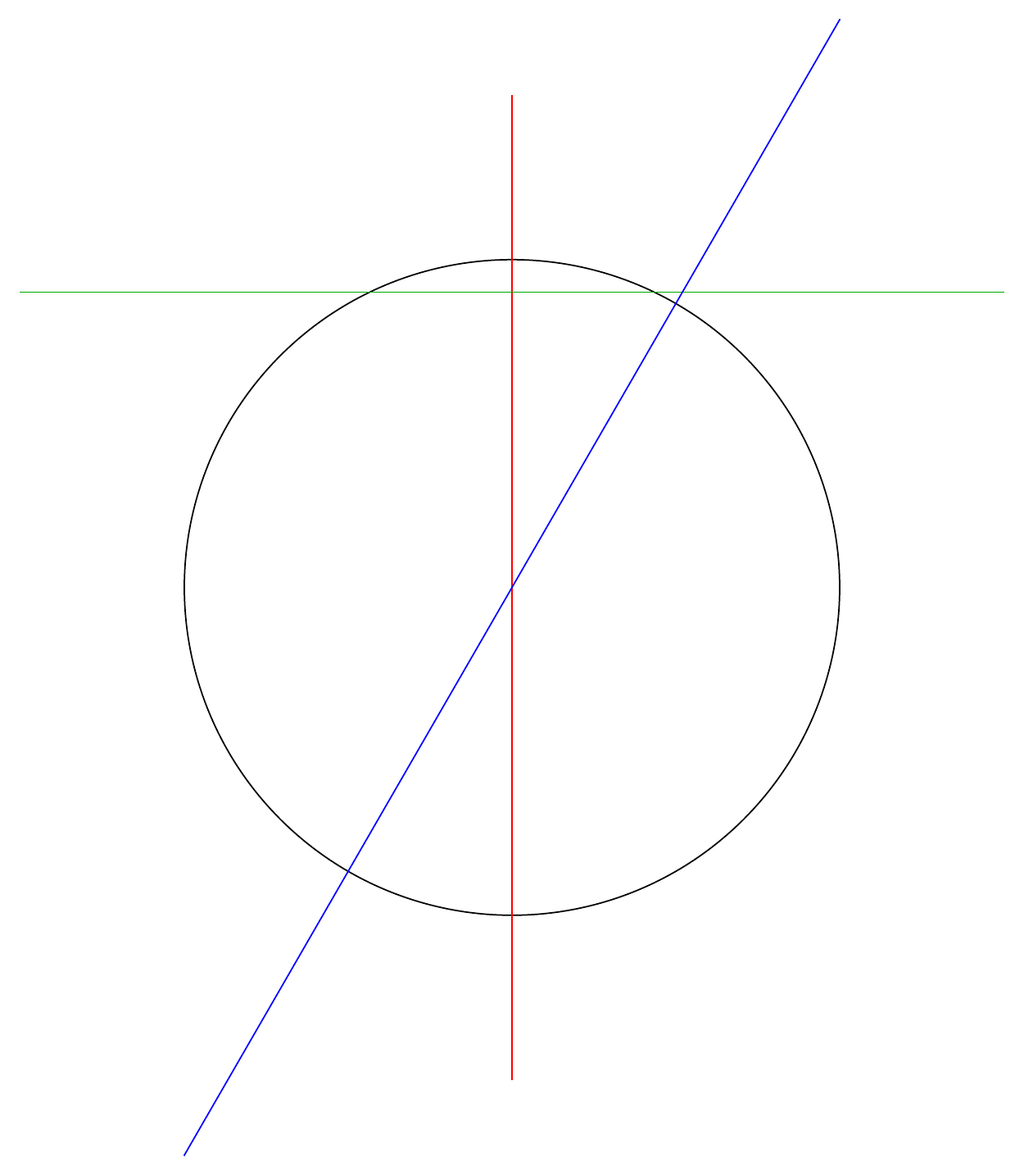}
\caption{Three lines and the unit circle}
\label{fig:lines1}
\end{center}
\end{figure}

%Here's a better visual aid to what I am saying.  This picture shows that since the intersection is at $\frac{\pi}{7}$, the other angles are given based on the fact that this is on the unit circle.
%\begin{center}
%	\includegraphics[width=0.2\linewidth]{"../../../Desktop/Semi Circle"}
%\end{center}

%\begin{figure}[h]
%\centering
%\includegraphics[width=0.15\linewidth]{"Two Circles"}
%\label{fig:TwoCircles}
%\end{figure}

It remains to find the last circle, corresponding to the top triangular face of the prism. Suppose this circle has center $(x,y)$ and radius $r$. This circle intersects the unit circle at an angle of $\pi/2$.  Using the Pythagorean Theorem to find an equation, we have that the equation of this intersection is 
\begin{equation}1 + r^2 = x^2 + y^2.\end{equation}
Since this circle meets the green line at a right angle, we see that the center must be on the green line, and hence that 
\begin{equation} y=\cos{( \pi /7 )}.\end{equation}
We need one more equation in $x$, $y$ and $r$, and this comes from the fact that the last circle meets the blue line at $\pi/3$. By Lemma \ref{lem:intersectline}, 

%Since the blue line is $y=\sqrt{3}$, the vector $\langle 1,\sqrt3 \rangle$ is parallel to the line. Recall that to find a vector perpendicular to the given vector, the values are flipped, and one is negated.  Thus, a vector perpendicular to $\langle 1, \sqrt{3} \rangle$ is $\langle \sqrt{3}, -1 \rangle$.  Projecting the vector down to the origin of this last circle, the equation of $r$ of this last circle can be found.  The point of the origin of this last circle is $(x,y)$ and the radius is $r$.  The angle in between the line and the circle is $60\degree$, which makes the inside angle $30\degree$.  By doing the scalar projection, it can be found that 

\begin{equation} y-r = \sqrt{3}x \end{equation}
(see Figure \ref{fig:lines2}).
%Have to change some figures
\begin{figure}[htb]
\begin{center}
\includegraphics[scale=0.4]{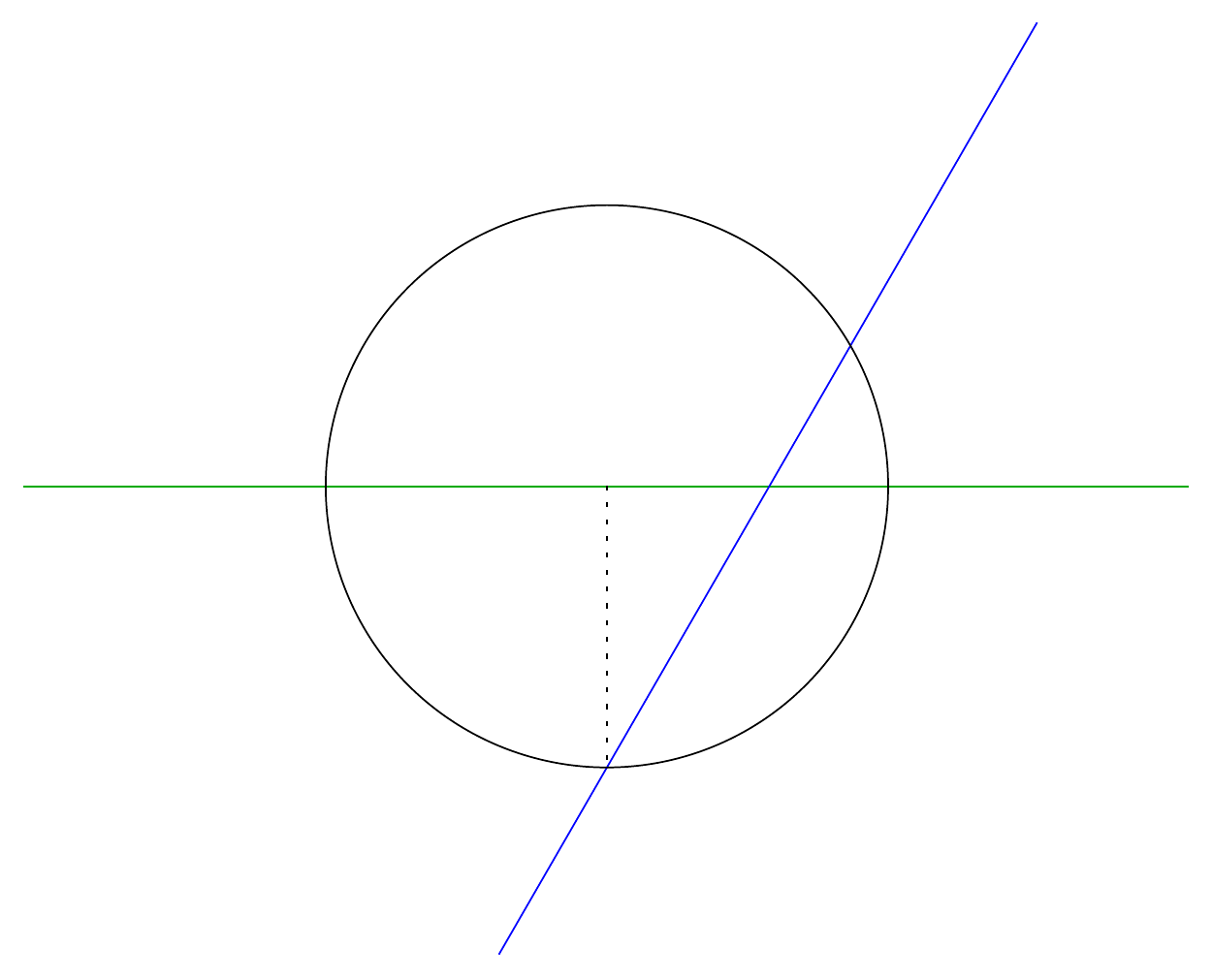}
\caption{The point $(x,y-r)$ lies on the blue line}
\label{fig:lines2}
\end{center}
\end{figure}

These equations, subject to $x>0$ and $r>0$, are sufficient to determine $x$, $y$ and $r$, and thus to determine the location of the prism precisely. We find that as well as $y=\cos{( \pi /7 )}$, that 
\[ x = \left(2\sqrt{3}\cos{(\pi/7)}-\sqrt{6\sin{(3\pi/14)}-2} \right)/4 \approx 0.4504\]
and 
\[r = \left( \sqrt{18\sin{(3\pi/14)} -6} -2\cos{(\pi/7)} \right) /4 \approx 0.1209.\]

Now that we have found the equations of the three lines and two circles which intersect at the prescribed angles, the hyperbolic prism we seek is the region inside the triangle defined by the three straight lines, and exterior to the two spheres whose equators are the given circles (see Figure \ref{fig:prism3d}).

\begin{figure}[htb]
\begin{center}
\includegraphics[scale=0.6]{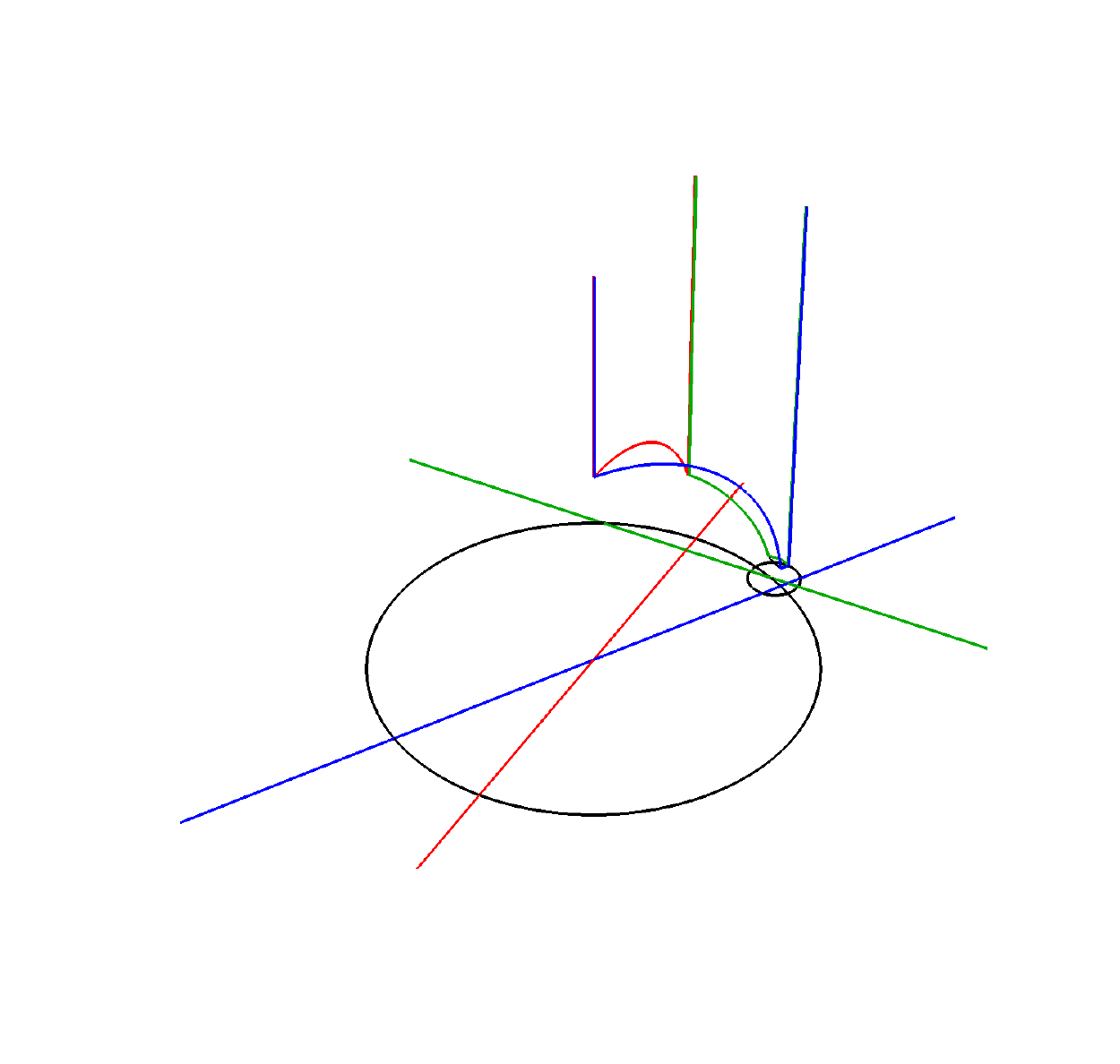}
\caption{The lines and circles define the hyperbolic planes bounding the prism}
\label{fig:prism3d}
\end{center}
\end{figure}

%\begin{figure}[h]
%\centering
%\includegraphics[width=0.15\linewidth]{"../../../Desktop/Another Circle"}
%\label{fig:AnotherCircle}
%\end{figure}

Changing the value of $a_4$ edge from 7 to 8, the equation of the green line changes to $y = \cos(\pi/8)$ by Lemma \ref{lem:thetaline}.  And the same thing happens changing it to equal 9.  This result shows that the line gets taller as the value of that side increases.  That edge can be left as $a_4 = m > 6$.

\subsection{[2, 4, 4] Cusp}  

\begin{lemma}There are four infinite familes and 24 specific configurations with labels 2, 4, and 4 at the ideal vertex.\end{lemma}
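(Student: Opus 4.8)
The plan is to follow the template used for the $[2,3,6]$ case. At the ideal vertex the three incident edges are $a_1,a_2,a_5$, and the Euclidean‑triangle condition there forces $\{a_1,a_2,a_5\}=\{2,4,4\}$ as a multiset, since $\tfrac12+\tfrac14+\tfrac14=1$. The first step is to rule out $a_5=2$: then $a_1=a_2=4$, and the hyperbolic condition (condition 3 of Theorem~\ref{andreev}) on $\{a_4,a_5,a_6\}$ needs $a_4,a_6\ge 3$, after which the spherical conditions (condition 2) at the bottom vertices $\{a_1,a_3,a_4\}=\{4,a_3,a_4\}$ and $\{a_2,a_3,a_6\}=\{4,a_3,a_6\}$ force $a_3=2$ and $a_4=a_6=3$, contradicting $\tfrac1{a_4}+\tfrac1{a_5}+\tfrac1{a_6}=\tfrac13+\tfrac12+\tfrac13>1$. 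Hence $a_5=4$, and by the reflectional symmetry $a_1\leftrightarrow a_2,\ a_4\leftrightarrow a_6,\ a_7\leftrightarrow a_8$ we may assume $a_1=2$ and $a_2=4$.

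With $a_1=2,\ a_2=4,\ a_5=4$ fixed, the remaining constraints are the five spherical inequalities at the finite vertices
\[\{a_1,a_3,a_4\},\quad \{a_2,a_3,a_6\},\quad \{a_5,a_7,a_8\},\quad \{a_4,a_7,a_9\},\quad \{a_6,a_8,a_9\},\]
together with $\tfrac1{a_4}+\tfrac1{a_5}+\tfrac1{a_6}<1$. Since $a_2=a_5=4$, the vertices $\{4,a_3,a_6\}$ and $\{4,a_7,a_8\}$ give $\tfrac1{a_3}+\tfrac1{a_6}>\tfrac34$ and $\tfrac1{a_7}+\tfrac1{a_8}>\tfrac34$, so each of $(a_3,a_6)$ and $(a_7,a_8)$ lies in $\{(2,2),(2,3),(3,2)\}$: only nine cases. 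For each I would read off the lower bound on $a_4$ from $\tfrac1{a_4}+\tfrac14+\tfrac1{a_6}<1$, note that $\{2,a_3,a_4\}$ additionally forces $a_4\le 5$ exactly when $a_3=3$, and then solve the inequalities at $\{a_4,a_7,a_9\}$ and $\{a_6,a_8,a_9\}$ for $a_9$. An infinite family appears precisely when $a_4$ is unbounded; this requires $\{2,a_3,a_4\}$ and $\{a_4,a_7,a_9\}$ to stay spherical for all large $a_4$, forcing $a_3=2$ and $a_7=a_9=2$, after which the only remaining freedom is $(a_6,a_8)\in\{2,3\}^2$. This produces the four families: $a_1=2,\ a_2=a_5=4,\ a_3=a_7=a_9=2,\ (a_6,a_8)\in\{2,3\}^2$, with $a_4$ arbitrary.

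Every admissible tuple not of this shape has $a_4$ — hence every label — bounded, and I would list these by branching on $(a_3,a_6)$, then $(a_7,a_8)$, then the finitely many remaining values of $a_4$, then $a_9$, discarding tuples identified under $a_1\leftrightarrow a_2,\ a_4\leftrightarrow a_6,\ a_7\leftrightarrow a_8$ and tabulating as in Section~5. The one subtlety, which I expect to be the main obstacle, is that the four families are taken to consist only of the arrangements whose free label $a_4$ is at least $6$ (matching the main theorem's statement that each family has one dihedral angle that can be any submultiple $\le\pi/6$); consequently the arrangements with that label equal to $3$, $4$ or $5$ — which do occur in several of the nine cases, for instance whenever $a_6=3$, since then the hyperbolic condition only leaves $a_4\ge 3$ — must be counted among the $24$ specific configurations rather than absorbed into a family. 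Keeping that boundary and the symmetry identifications straight is essentially the whole content of the argument; once they are, the tally is $4$ infinite families and $24$ specific configurations.
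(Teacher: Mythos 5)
Your proposal is correct and follows essentially the same route as the paper: rule out $a_5=2$ by combining the spherical conditions at the bottom vertices with the hyperbolic condition on $\{a_4,a_5,a_6\}$, normalize to $a_1=2$, $a_2=4$, $a_5=4$ by the prism's reflection symmetry, and then branch on $(a_3,a_6)$, $a_4$, and $(a_7,a_8,a_9)$, with the four infinite families arising exactly when $a_3=a_7=a_9=2$ and $a_4\geq 6$ is free. The only shortfall is that you describe rather than execute the finite enumeration yielding the $24$ specific configurations (the paper lists them case by case: $5+5+6+4+4=24$), but your setup of the constraints, your identification of the five finite vertices, and your handling of the family-versus-specific boundary at $a_4=5$ versus $a_4\geq 6$ all agree with the paper.
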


\begin{proof}First, we note that if $a_5=2$, then $a_1=a_2=4$. Then $a_4$ and $a_6$ are both at most 3, and then $a_4$, $a_5$ and $a_6$ are not the labels of a hyperbolic triangle. Thus we must have $a_5=4$, and without loss of generality we will assume $a_1=2$ and $a_4=4$.

With these assumptions in place, we next note that $a_3$ must be 2 or 3. Let us first treat the case $a_3=3$. In this case, we must have $a_6=2$, and then $a_4=5$, because it forms a hyperbolic triangle with $2$ and $4$ and a spherical triangle with $2$ and $3$. The possible labels for $(a_7, a_8, a_9)$ are then $(2,2,2), (2,2,3), (2,3,2), (2,3,3)$ and $(3,2,2)$.

Finally, suppose $a_1=2$, $a_2=4$, $a_3=2$, and $a_5=4$. Then $a_6$ is either $2$ or $3$. If $a_6=2$, then $a_4 \geq 5$. If $a_4 = 5$, then as above, the possible labels for $(a_7, a_8, a_9)$ are $(2,2,2), (2,2,3), (2,3,2), (2,3,3)$ and $(3,2,2)$. If $a_4 \geq 6$, then $a_7=a_9=2$, and $a_8$ may be $2$ or $3$. If $a_6=3$, then $a_4 \geq 3$. If $a_4 = 3$, then the possible labels for $(a_7, a_8, a_9)$ are $(2,2,2), (2,2,3), (2,2,4), (2,2,5), (2,3,2)$ and $(3,2,2)$. If $a_4 = 4$ or $5$, then the possible labels for $(a_7, a_8, a_9)$ are $(2,2,2), (2,2,3), (2,3,2)$ and $(3,2,2)$. If $a_4 \geq 6$, then $a_7=a_9=2$, and $a_8$ may be $2$ or $3$.\end{proof}

The first possibility of this arrangement we will work with is where $a_9$ is 3, $a_8$ is 2, $a_7$ is 2, $a_5$ is 4, the $a_4$ is $m \geq 5$, $a_6$ is 2.  This $a_1$ is 2, the $a_2$ is 4, and $a_3$ is 2 (see Figure \ref{fig:TriangularPrism3}).
%\begin{center}
%\scalebox{.1}{\includegraphics{"../../../Desktop/2 4 4 Option 1"}}
%\end{center}

\begin{figure}[htb]
\begin{center}
\includegraphics[width=0.2\linewidth]{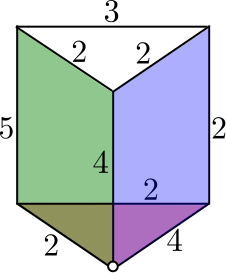}
\caption{A $[2, 4, 4]$ cusp}
\label{fig:TriangularPrism3}
\end{center}
\end{figure}

With the bottom vertex being 2, 4, 4, we have a $(\pi/2, \pi/4, \pi/4)$ triangle, with blue, red and green sides.  The back face of the prism meets green at an angle of $\pi/5$, and the back face meets the blue and red faces at $\pi/2$.  The back face corresponds to the unit circle. We choose the red face to correspond to the line $x=0$. By Lemma \ref{lem:thetaline}, the green face corresponds to the line $y=\cos{(\pi/5)}$. The blue line corresponds to the line $y=x$.

Finally, the top face meets blue and green at angle $\pi/2$, and the unit circle at angle $\pi/3$.  Meeting blue and green at $\pi/2$ means the center of the last circle is on both blue and green lines.  Their intersection point is $(x, y) = (\cos{(\pi/5)}, \cos{(\pi/5)})$. Meeting the unit circle at $\pi/3$ means that, by Lemma \ref{lem:cosines}, we have \[ x^2 + y^2 = 1^2 + r^2 -2\cos{\left( \frac{2\pi}{3} \right) r}\]
and so
\[  x^2 + y^2 = 1 + r^2 + r.\]
Here we find $r=(\sqrt[4]{5}-1)/2$.
%\begin{center}
%\scalebox{.1}{\includegraphics{"../../../Desktop/Two Circles 2 4 4"}}
%\end{center}

As we saw, with the $[ 2, 4, 4] $ cusp the edges $a_7, a_8$ and $a_9$ are always labeled 2, 3 or 4, and they cannot all be labeled 3. We now describe what happens to the equations defining $x$, $y$ and $r$ when these labels change, noting that the red, blue and green faces, as well as the unit circle, are unaffected by these changes.

If we change $a_9$ to be 2, keeping $a_7$ and $a_8$ as 2.  This means that the top face intersects the back face at $\pi/2$.  The intersection between the last circle and the unit circle creates an angle of $\pi/2$. We still have that $x = y = \cos{( \pi / 5 )}$, and the third equation becomes $x^2 + y^2 = 1 + r^2$.  

If $a_9$ is kept as a 2, and we change $a_8$ to 3, then because the new circle meets the green line at right angles, we still have $y = \cos{( \pi / 5)}$. Also, since $a_9 = 2$, the new circle meets the unit circle at right angles, and so $x^2 + y^2 = 1 + r^2$. By Lemma \ref{lem:intersectline}, the final equation in this case is
\[ y = x + \frac{r\sqrt{2}}{2}. \]

%Keeping $a_9$ a 2, we then change $a_8$ to 3 and the $a_6$ to 3.  This changes the center of the last circle to $\cos(x, y - \frac{\sqrt{2}}{4})$.  We get the equation $x^2 + y^2 = 1 + r^2$. Since the green line shifted upward, we project $<x, y - \frac{\sqrt{2}}{4}>$ onto $<-1, 1>$.  Then we get 
%$\frac{1}{\sqrt{2}}\cdot{(x - y + \frac{\sqrt{2}}{4})} = \frac{r}{2}$ because we shifted the green line by a factor of $\frac{1}{2}$ of the radius upward. 
%\begin{figure}[htb]
%\centering
%\scalebox{.1}{\includegraphics{"../../../Desktop/Green Shift 1"}}
%\label{fig:GreenShift1}
%\end{figure}

%Next we change $a_7$, $a_8$, and $a_9$ to 2 and $a_6$ to 3.  The center of the last circle is the intersection of the green line and the red line, which is $(x, x + \frac{\sqrt{2}}{4})$.  Having $a_6$ intersect the back face at $\pi/3$ results in a $60\degree$ angle, which means that the last equation is $1 + r^2 + r = x^2 + y^2$.

The next arrangement we consider has $a_7$, $a_8$, and $a_9$ equal to 2, the vertical edges as $m \geq 5$, 4, and 2, and the bottom edges as $a_1=2$, $a_2=4$, and $a_3=3$. This change shifts the red face from intersecting the unit circle at $x = 0$ to $x = -1/2$. The green line will be $y = \cos{( \pi / a_4 )}$, and the blue will be $y=x$. Since $a_7 = a_8 = 2$, the center of the second circle is at the intersection of the blue and green lines. Letting $a_9$ be 2 results in the final equation of the last circle being $1 + r^2 = x^2 + y^2$.
%\begin{figure}[htb]
%\centering
%\scalebox{.1}{\includegraphics{"../../../Desktop/Black Line Shift"}}
%\label{fig:BlackLineShift}
%\end{figure}

Lastly, keeping $a_3$ as 3 (so the red line is still $x=-1/2$), we change $a_6$ to 3.  By Lemma \ref{lem:thetaline}, this shifts the blue line downward from $y = x$ to $y = x - \sqrt{2}/2$. The equations defining $x, y$ and $r$ here are $y=\cos{( \pi / a_4 )}$, $y = x - \sqrt{2}/2$ and $x^2+y^2 = 1+r^2$.
%\begin{figure}[htb]
%\centering
%\scalebox{.1}{\includegraphics{"../../../Desktop/Green Shift 2"}}
%\label{fig:GreenShift2}
%\end{figure}

%With this arrangement of labels at the $[2, 4, 4]$ cusp, because $a_2=4$, from the $a_2$, $a_3$, $a_6$ vertex we see that $a_3$ and $a_6$ could both be 2, or one of them could be 2, and the other 3. Similarly, the $a_5$, $a_7$, $a_8$ vertex tells us that $a_7$ and $a_8$ could both be 2, or one could be 2 and the other 3. If $a_3=a_6=2$, then the $a_4$, $a_5$, $a_6$ hyperbolic triangle means that $a_4$ can take any value greater than 4, but if $a_4>5$, then $a_7=a_9=2$. If $a_3=2$ and $a_6=3$, then $a_4$ can take any value greater than 2. If $a_3=3$ and $a_6=2$, then $a_4$ must equal 5. 

%As with the $[2, 3, 6]$ cusp, we note that we may not rearrange the cusp so that $a_5=2$. Since $a_1=a_2=4$, we must have that $a_4$ and $a_6$ are both no greater than 3, and hence that $a_4$, $a_5$ and $a_6$ are not the labels of a hyperbolic triangle.

\subsection{[3, 3, 3] Cusp}  

\begin{lemma}There are no infinite familes and 22 specific configurations with labels 3, 3, and 3 at the ideal vertex.\end{lemma}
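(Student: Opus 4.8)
The plan is to run the same kind of constrained enumeration used above for the $[2,3,6]$ and $[2,4,4]$ cusps, but now pushing the inequalities far enough to see that \emph{every} label is pinned into a bounded range; that boundedness is exactly what makes this case produce only finitely many arrangements and no infinite family. First I would record the hypothesis of the lemma: the ideal vertex is incident to the edges labeled $a_1, a_2, a_5$, whose labels form the multiset $\{3,3,3\}$, so $a_1 = a_2 = a_5 = 3$ (and indeed $\tfrac13+\tfrac13+\tfrac13 = 1$, so this is a Euclidean triangle as condition $2$ of Theorem \ref{andreev} requires at an ideal vertex). Since $a_1 = a_2$, the reflectional symmetry of the prism now acts merely by the involution $(a_4\,a_6)(a_7\,a_8)$, fixing $a_3, a_5, a_9$; I will carry all admissible arrangements along and divide by this involution only at the very end.

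Next I would pin down the triangular face through the ideal vertex together with the prismatic $3$-circuit. Reading labels off Figure \ref{fig:LabeledPrism}, the two finite vertices of that face carry the spherical triples $\{a_1,a_3,a_4\} = \{3,a_3,a_4\}$ and $\{a_2,a_3,a_6\} = \{3,a_3,a_6\}$, while $\{a_4,a_5,a_6\} = \{a_4,3,a_6\}$ is the prismatic $3$-circuit and so is hyperbolic by condition $3$. Thus $\tfrac1{a_3}+\tfrac1{a_4} > \tfrac23$, $\ \tfrac1{a_3}+\tfrac1{a_6} > \tfrac23$, and $\tfrac1{a_4}+\tfrac1{a_6} < \tfrac23$. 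If $a_3 \ge 3$ the first two inequalities force $a_4 = a_6 = 2$, contradicting the third, so $a_3 = 2$; then the first two give $a_4, a_6 \le 5$, and $a_4 = 2$ would force $a_6 \ge 7$ through the hyperbolic condition, again a contradiction (and symmetrically for $a_6$). Hence $a_3 = 2$ and $a_4, a_6 \in \{3,4,5\}$ with $(a_4,a_6) \ne (3,3)$: exactly eight ordered possibilities, and in particular nothing on the bottom triangle or the verticals is unbounded.

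Then I would treat the opposite triangular face, whose three finite vertices carry triples consisting of one of $a_4, a_5, a_6$ together with two of $a_7, a_8, a_9$. Since $a_4, a_5, a_6 \ge 3$, condition $2$ forces each of $\tfrac1{a_7}+\tfrac1{a_8}$, $\ \tfrac1{a_7}+\tfrac1{a_9}$, $\ \tfrac1{a_8}+\tfrac1{a_9}$ to exceed $\tfrac23$, and an inequality $\tfrac1p+\tfrac1q > \tfrac23$ forces $\min(p,q) = 2$ and $\max(p,q) \le 5$. So each of the three pairs drawn from $a_7, a_8, a_9$ contains a $2$, all three lie in $\{2,3,4,5\}$, and the case is finite — so there is no infinite family. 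The remaining work is bookkeeping: for each of the eight choices of $(a_4,a_6)$, list the admissible $(a_7,a_8,a_9)$ (the exact list depends on $a_4$ and $a_6$ through the sharper inequalities $\tfrac1{a_7}+\tfrac1{a_9} > 1 - \tfrac1{a_4}$ and $\tfrac1{a_8}+\tfrac1{a_9} > 1 - \tfrac1{a_6}$ coming from the two finite vertices carrying $a_4$ and $a_6$), assemble all admissible $9$-tuples, and pass to orbits under $(a_4\,a_6)(a_7\,a_8)$, whose fixed tuples are exactly those with $a_4 = a_6$ and $a_7 = a_8$; one half of the sum of the number of valid tuples and the number of fixed tuples then gives the count, which should come out to $22$. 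As in the discussion following Theorem \ref{andreev}, conditions $4$ and $5$ of that theorem impose nothing new here, so every surviving arrangement is genuinely realized.

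I expect the main obstacle to be precisely that last enumeration and orbit count: keeping the casework over $(a_4,a_6)$ and $(a_7,a_8,a_9)$ both complete and non-redundant, and correctly identifying the reflection-fixed tuples. A secondary trap is the strictness of all the inequalities — borderline triples such as $\{3,2,6\}$ or $\{3,3,n\}$ are Euclidean rather than spherical and must be discarded — so every bound above must be applied with a strict sign.
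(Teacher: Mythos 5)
Your proposal is correct and follows essentially the same route as the paper: force $a_3 = 2$ and $a_4, a_6 \in \{3,4,5\}$ with $(a_4,a_6) \neq (3,3)$, then bound $a_7, a_8, a_9$ via the spherical-vertex conditions at the top face and enumerate; the only difference is that you handle the reflection symmetry by Burnside counting on ordered tuples where the paper assumes $a_4 \le a_6$ and discards symmetric duplicates by hand. The bookkeeping you defer does close as expected: there are $6+6+6+6+4+4+4+4 = 40$ admissible ordered tuples, of which $4$ (namely $(a_7,a_8,a_9) \in \{(2,2,2),(2,2,3)\}$ for each of $(a_4,a_6)=(4,4)$ and $(5,5)$) are fixed by $(a_4\,a_6)(a_7\,a_8)$, giving $(40+4)/2 = 22$.
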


\begin{proof}In studying labelings here, we note that this labeling at the Euclidean vertex is symmetric, and thus we will discard some labelings as being symmetric to other labelings already listed.

We first note that if $a_3 >2$, then $a_4 = a_6 = 2$ and we do not have a hyperbolic triangle. So $a_3=2$. Then by spherical triangles, $a_4$ and $a_6$ must both be one of $2, 3, 4$ or $5$, but neither can be 2 because of the hyperbolic triangle, and also they cannot both be $3$. Because of symmetry considerations, we suppose without loss of generality that $a_4 \leq a_6$. 

\begin{itemize}\item If $a_4=3$ and $a_6=4$, then $(a_7,a_8,a_9)$ can be $(2,2,2), (2,2,3), (2,3,2), (3,2,2), (4,2,2)$ or $(5,2,2)$;
\item If $a_4=3$ and $a_6=5$, then $(a_7,a_8,a_9)$ can be $(2,2,2), (2,2,3), (2,3,2), (3,2,2), (4,2,2)$ or $(5,2,2)$;
\item If $a_4=4$ and $a_6=4$, then $(a_7,a_8,a_9)$ can be $(2,2,2), (2,2,3)$ or $(2,3,2)$ (we discard $(3,2,2)$ as it is symmetric to $(2,3,2)$);
\item If $a_4=4$ and $a_6=5$, then $(a_7,a_8,a_9)$ can be $(2,2,2), (2,2,3), (2,3,2)$ or $(3,2,2)$;
\item If $a_4=5$ and $a_6=5$, then $(a_7,a_8,a_9)$ can be $(2,2,2), (2,2,3)$ or $(2,3,2)$ (we discard $(3,2,2)$ as it is symmetric to $(2,3,2)$).
\end{itemize}
\end{proof}

The arrangement we examine is with $a_6$ equal to 4, $a_4= 3$, and $a_7$, $a_8$, and $a_9$ equal to 2. Thus the full arrangement is $[ a_1,  a_2,  a_3,  a_4,  a_5,  a_6,  a_7,  a_8,  a_9, ] = [3, 3, 2, 3, 3, 4, 2, 2, 2 ]$. As before, we will let the back side of the prism be the unit circle. Since all of the admissible labelings here have $a_3=2$, we will fix the red line to be $x=0$.
%Since there is only one possible arrangement for the vertex with this set of values, our calculations will be much simpler, and we will have a set limit of total possibilities.

%Just like what was done for the other arrangements, we must use the rules in order to have the correct values on all sides.  Setting the bottom, left and right edges to equal 3 and the center, vertical edge to 3, we have thus established our Euclidean vertex.  The other two vertical sides can be a 3, 4, or 5.  However, at least one of the two remaining vertical edges must be a 4 or 5, otherwise we will create a Euclidean Triangle where we want a Hyperbolic Triangle.  The bottom, back edge is only allowed to be a 2 otherwise the bottom, right and left vertices will become hyperbolic instead of spherical.  Lastly, all the top edges can be a 2 or 3.  However, at least two of these edges must a 2 otherwise will create hyperbolic and even euclidean triangles when we do not want them.  The prism looks like so:

%\begin{figure}[htb]
%\centering
%\scalebox{0.2}{\includegraphics[width=0.7\linewidth]{"../../../Desktop/Image 1"}}
%\label{fig:Image1}
%\end{figure}

Since the blue side meets the back side at $\pi/4$, the blue line intersects the unit circle at $\pi/4$.  The green side meets the back side at $\pi/3$, and the red side meets the back side at $\pi/2$.  This means that the red side creates a line that passes through the center of the unit circle at angle $\pi/2$.  The red, green, and blue lines create an equilateral triangle since their sides meet each other at $\pi/3$.  %The following picture illustrates what I have just described.

%\begin{figure}[htb]
%\centering
%\scalebox{0.2}{\includegraphics[width=0.7\linewidth]{"../../../Desktop/Image 2"}}
%\label{fig:Image2}
%\end{figure}

If we shift the green line so that is passes through the center of the unit circle, we can find the slope of the blue line.  Since this line creates an angle of $\pi/3$, in the fourth quadrant, there is a remaining $\pi/6$ in order to make a right angle.  Thus the slope of the blue line is 
\[ \tan \left( \frac{\pi}{6} \right) = \frac{\sin(\pi/6)}{\cos(\pi/6)} = \frac{-1/2}{\sqrt{3}/2} = -\frac{\sqrt{3}}{3}.\]

The slope $-\sqrt{3}/3$ corresponds to the vector $\langle 1, -\sqrt{3}/3 \rangle$.  To find a vector orthogonal to this vector, we flip one value and negate the other value.  We get the vector $\langle 1, \sqrt{3} \rangle$.  The unit vector is $\langle 1/2, \sqrt{3}/2\rangle.$  Using Lemma \ref{lem:thetaline}, we multiply the unit vector by $\cos(\pi/3)$ to get the point $(1/4, \sqrt{3}/4)$.  Plugging in the slope and this point into the point-slope formula, we get $y = (-\sqrt{3}/3)x + \sqrt{3}/3.$

The slope of the blue line is $\sqrt{3}/3.$  The vector corresponding to this slope is $\langle 1, -\sqrt{3} \rangle$ and the unit vector is $\langle 1/2, -\sqrt{3}/2 \rangle$.  We multiply the unit vector by $\cos(\pi/4)$ to find the point $(\sqrt{2}/4, -\sqrt{6}/4)$.  Plugging this point and the slope into the point-slope formula, we get
$y = (\sqrt{3}/3)x - \sqrt{6}/3.$

Since $a_7$, $a_8$, and $a_9$ equal 2, that means the last circle intersects the unit circle at right angles.  This also means that the center of this last circle lies on the intersection of the green and blue lines.  Since the last circle intersects the unit circle at right angles, we use the Pythagorean Theorem to find that the last equation is $1 + r^2 = x^2 + y^2.$

%\begin{figure}[htb]
%\centering
%\scalebox{0.2}{\includegraphics[width=0.7\linewidth]{"../../../Desktop/Image 3"}}
%\label{fig:Image3}
%\end{figure}

%This procedure that we have completed is the foundation we will use to compute all the other possible arrangements for this particular prism.  For now, we are going to focus on finding equations where the only side values that change are the vertical edges.  In other words, we keep everything the same, but change vertical, right and left edges.

%\begin{figure}[htb]
%\centering
%\scalebox{0.2}{\includegraphics[width=0.7\linewidth]{"../../../Desktop/Image 5"}}
%\label{fig:Image5}
%\end{figure}

%The next possibility is $a_6$ equal to 5.  The only thing that changes is that the blue line has shifted from the center of the unit circle a distance of $\cos(\pi/5)$.  We multiply the unit vector by $\cos(\pi/5)$ and plug in the new point into the point-slope formula to get $y = (\sqrt{3}/3)x - (2\sqrt{3}\cos(\pi/5))/3.$ The equations for the blue line and the last circle are the same. 

Here is a list of the equations of the following arrangements where we adjust the values of $a_4$ and $a_6$.

\begin{center}
	\begin{tabular}{ c c c }
		\text{Values of $a_4$, $a_5$ and $a_6$: Left to Right} & \text{Green Line} & \text{Blue Line} \\
		%4, 3, 3 & $y = -\frac{\sqrt{3}}{3}x + \frac{\sqrt{6}}{3}$ & $y = \frac{\sqrt{3}}{3}x - \frac{\sqrt{3}}{3}$ \\
		3, 3, 4 & $y = -\frac{\sqrt{3}}{3}x + \frac{\sqrt{3}}{3}$ & $y = \frac{\sqrt{3}}{3}x - \frac{\sqrt{6}}{3}$ \\
		3, 3, 5 & $y = -\frac{\sqrt{3}}{3}x + \frac{\sqrt{3}}{3}$ & $y = \frac{\sqrt{3}}{3}x - \frac{2\sqrt{3}}{3}\cos(\frac{\pi}{5})$ \\
		4, 3, 4 & $y = -\frac{\sqrt{3}}{3}x + \frac{\sqrt{6}}{3}$ & $y = \frac{\sqrt{3}}{3}x - \frac{\sqrt{6}}{3}$ \\
		4, 3, 5 & $y = -\frac{\sqrt{3}}{3}x + \frac{\sqrt{6}}{3}$ & $y = \frac{\sqrt{3}}{3}x - \frac{2\sqrt{3}}{3}\cos(\frac{\pi}{5})$ \\
		%5, 3, 3 & $y = -\frac{\sqrt{3}}{3}x + \frac{2\sqrt{3}}{3}\cos(\frac{\pi}{5})$ & $y = \frac{\sqrt{3}}{3}x - \frac{\sqrt{3}}{3}$ \\
		%5, 3, 4 & $y = -\frac{\sqrt{3}}{3}x + \frac{2\sqrt{3}}{3}\cos(\frac{\pi}{5})$ & $y = \frac{\sqrt{3}}{3}x - \frac{\sqrt{6}}{3}$ \\
		5, 3, 5 & $y = -\frac{\sqrt{3}}{3}x + \frac{2\sqrt{3}}{3}\cos(\frac{\pi}{5})$ & $y = \frac{\sqrt{3}}{3}x - \frac{2\sqrt{3}}{3}\cos(\frac{\pi}{5})$ \\
	\end{tabular}
\end{center}

As long as $a_7 = a_8 = a_9 = 2$, the third equation defining the last circle that intersects the unit circle does not change for all of these specific arrangements.  Thus, for all of these arrangements, the equation of the last circle is $1 + r^2 = x^2 + y^2.$ When we change $a_9$ to 3, the equation of the last circle for the arrangements listed above is $x^2 + y^2 = 1 + r^2 + r.$ When $a_7$ or $a_8$ change, the the equations change according to Lemma \ref{lem:intersectline} and Lemma \ref{lem:cosines}.

\section{Matrices}\label{s:matrices}
%%%%%

In this section, we describe a general method which produces lines and circles which intersect at the prescribed angles, which produces the same results described in the previous section. We then show how to take this geometric data of the lines and circles and use it to produce matrix generators in $\mathrm{PSL}_2(\mathbb{C})$ for the orientation-preserving subgroup of the group generated by reflections in the faces of the corresponding prism. Each group will be generated by four matrices, where each matrix acts by pairing two faces of the polyhedron one obtains by doubling the prism across one face. 

We note at this point that this method also works for examples of prisms which have more than one ideal vertex; that is, where one or more of the five vertices not located at $\infty$ in the upper half-space is found in the complex plane. In this paper, we restrict to the case of prisms with one cusp, but the method we describe should work for some, if not all, labelings of the prism which correspond to prisms with more than one cusp.

\subsection{The case $a_3 = 2$}

As above, we suppose that the back quadrilateral face lies on the unit sphere, or equivalently that one of the two circles is the unit circle. We further suppose that the red face lies above the imaginary axis, or equivalently that one of the straight lines is $x=0$; this corresponds to asking that $a_3=2$. Lastly, we assume that the polyhedron lies to the right of the imaginary axis as we view it from above; in other words, we assume that it lies in the region of $\Hys$ with $x \geq 0$.

By considering Figure \ref{fig:thetaangle} and applying Lemma \ref{lem:thetaline}, we see that the blue line has equation
\[ y = \cot{ \left( \frac{\pi}{a_2} \right) } x - \frac{\cos{( \pi/a_6 )}}{\sin{( \pi / a_2 )}} \] 
%%add figure
and, by similar reasoning, the green line has equation
\[ y = -\cot{ \left( \frac{\pi}{a_1} \right) } x + \frac{\cos{( \pi/a_4 )}}{\sin{( \pi / a_1 )}}. \]
The final face is on a circle with center $(x,y)$ and radius $r$, which meets the blue line at angle $\pi / a_8$. By Lemma \ref{lem:intersectline} we have one equation
\[y - \frac{r\cos{( \pi / a_8 )} }{\sqrt{\cot^2{( \pi / a_2 )}+1}} = \cot{( \pi / a_2 )} \left( x + \frac{r\cot{( \pi / a_2 )}\cos{( \pi / a_8 )}}{\sqrt{\cot^2{( \pi / a_2 )}+1}} \right) - \frac{\cos{( \pi / a_6 )}}{ \sin{( \pi / a_2 )} },\]
which simplifies to
\begin{equation}\label{eq1} y - r\cos{( \pi / a_8 )} \sin{( \pi / a_2 )} = \cot{( \pi / a_2 )} \left( x + r\cos{( \pi / a_2 )}\cos{( \pi / a_8 )} \right) - \frac{\cos{( \pi / a_6 )}}{ \sin{( \pi / a_2 )} }.\end{equation}
By applying Lemma \ref{lem:intersectline}, with appropriate modifications, to the green line, we also have the equation
\begin{equation}\label{eq2}y + r\sin{( \pi / a_1 )} \cos{( \pi / a_7 )} = -\cot{( \pi / a_1 )} \left( x + r\cos{( \pi / a_1 )}\cos{( \pi / a_7 )} \right) + \frac{\cos{( \pi / a_4 )}}{ \sin{( \pi / a_1 )} }.\end{equation}  
Finally, the two circles intersecting at angle $\pi / a_9$ yields, via the Cosine Law and Lemma \ref{lem:cosines}, the equation
\[ x^2 + y^2  = 1^2 + r^2 - 2(1)(r)\cos{ \left( \pi - \frac{\pi}{a_9} \right)} \]
or
\begin{equation}\label{eq3} x^2 + y^2  = 1^2 + r^2 + 2r\cos{\left( \frac{\pi}{a_9} \right)}.\end{equation}
Equations \ref{eq1}, \ref{eq2} and \ref{eq3} together define $x,y$ and $r$ and determine the final circle required.

With all of this work in mind, we require seven quantities to write down the matrices we seek. These quantities are $y_1 = \cos{( \pi / a_4 )} / \sin{( \pi / a_1 )}$ and $y_2 = - \cos{( \pi / a_6 )} / \sin{( \pi / a_2 )}$, the $y$-intercepts of the green and blue line; the angles $\theta_1 = \pi / a_1$ and $\theta_2 = \pi / a_2$ of the ideal triangle at these points; and the center $(x,y)$ and radius $r$ of the second circle. Given these quantities, the matrices are 
\[ M_1 = \begin{pmatrix} 0 & -1 \\ 1 & 0 \end{pmatrix}, \]
which pairs two sides which both lie on the unit sphere;
\[ M_2 = \begin{pmatrix} e^{-i\theta_1} & y_1 i (e^{i\theta_1} - e^{-i\theta_1}) \\ 0 & e^{i\theta_1} \end{pmatrix}, \]
which rotates counter-clockwise by angle $2\theta_1$ about $(0,y_1)$;
\[ M_3 = \begin{pmatrix} e^{i\theta_2} & y_2 i (e^{-i\theta_2} - e^{i\theta_2}) \\ 0 & e^{-i\theta_2} \end{pmatrix}, \]
which rotates clockwise by angle $2\theta_2$ about $(0,y_2)$; and
\[ M_4 = \begin{pmatrix} \frac{1}{r}(-x+yi) & \frac{1}{r}(x^2+y^2)-r \\ \frac{1}{r} & \frac{1}{r}(-x-yi) \end{pmatrix}, \]
which sends the second circle to its reflection in the imaginary axis. These matrices satisfy the relations
\[ M_2^{a_1} = 1, \ \ M_3^{a_2} = 1, \ \ M_1^{a_3} = M_1^2 = 1, \ \ ( M_2^{-1} M_1)^{a_4} = 1, \ \ (M_3^{-1} M_2)^{a_5} = 1, \]
\[ (M_3^{-1} M_1)^{a_6} = 1, \ \ (M_4^{-1} M_2)^{a_7} = 1, \ \ (M_4^{-1} M_3)^{a_8} = 1, \ \ (M_4^{-1}M_1)^{a_9} = 1.\]

\subsection{The case $a_3 = 3$}

In the case that $a_3\neq 2$, then we saw that $a_3=3$. In this case, we keep the back face as corresponding to the unit circle, and move the red line to $x=-1/2$ so that it intersects the unit circle at angle $\pi/3$. The equations of the green and blue lines will be the same as the case $a_3=2$, and the second circle will be defined by the same three equations \ref{eq1}, \ref{eq2} and \ref{eq3}. As in the previous case, we define $\theta_1 = \pi / a_1$ and $\theta_2 = \pi / a_2$, and let $(x,y)$ and $r$ be the center and radius of the second circle. In place of $y_1$ and $y_2$ we define $z_1 = -1/2 + (\cos{( \pi / a_4 )} / \sin{( \pi / a_1 )} + \cot{( \pi /a_1 )}/2)i$ and $z_2 = -1/2 + (\cos{( \pi / a_6 )} / \sin{( \pi / a_2 )} - \cot{( \pi /a_2 )}/2)i$, the points where the green and blue lines meet the red line $x=-1/2$. Our matrices are then
\[ M_1 = \begin{pmatrix} -1 & -1 \\ 1 & 0 \end{pmatrix}, \]
which pairs two sides which lie on the unit sphere and the unit sphere centered at $(-1,0)$;
\[ M_2 = \begin{pmatrix} e^{-i\theta_1} & z_1(e^{i\theta_1} - e^{-i\theta_1}) \\ 0 & e^{i\theta_1} \end{pmatrix}, \]
which rotates counter-clockwise by angle $2\theta_1$ about $z_1$;
\[ M_3 = \begin{pmatrix} e^{i\theta_2} & z_2(e^{-i\theta_2} - e^{i\theta_2}) \\ 0 & e^{-i\theta_2} \end{pmatrix}, \]
which rotates clockwise by angle $2\theta_2$ about $z_2$; and
\[ M_4 = \begin{pmatrix} \frac{1}{r}(-(x+1)+yi) & \frac{1}{r}(-(x+1)+yi)(-x-yi)-r \\ \frac{1}{r} & \frac{1}{r}(-x-yi) \end{pmatrix}, \]
which sends the second circle to its reflection in the line $x=-1/2$. These matrices satisfy the relations
\[ M_2^{a_1} = 1, \ \ M_3^{a_2} = 1, \ \ M_1^{a_3} = M_1^3 = 1, \ \ ( M_2^{-1} M_1)^{a_4} = 1, \ \ (M_3^{-1} M_2)^{a_5} = 1, \]
\[ (M_3^{-1} M_1)^{a_6} = 1, \ \ (M_4^{-1} M_2)^{a_7} = 1, \ \ (M_4^{-1} M_3)^{a_8} = 1, \ \ (M_4^{-1}M_1)^{a_9} = 1.\]

%%%%%
\section{Results}\label{s:results}
%%%%%

In this section, we list all of the possible labeling of the prism which were described in Section 3. We note that the twelve infinite families, where one label may vary, are shaded gray to distinguish them.

\subsection{$[2,3,6]$ cusp}

\subsubsection{$a_3=3$}

\begin{center}
\begin{tabular}{|c|c|c|c|c|c|c|c|c|}
\hline
$a_1$ &  $a_2$ &  $a_3$ &  $a_4$ &  $a_5$ &  $a_6$ &  $a_7$ &  $a_8$ &  $a_9$  \\
\hline
\hline
2 & 3 & 3 & 4 & 6 & 2 & 2 & 2 & 2  \\
\hline
2 & 3 & 3 & 4 & 6 & 2 & 2 & 2 & 3  \\
\hline
2 & 3 & 3 & 5 & 6 & 2 & 2 & 2 & 2  \\
\hline
2 & 3 & 3 & 5 & 6 & 2 & 2 & 2 & 3  \\
\hline
\end{tabular}
\end{center}

\subsubsection{$a_3=2$}

\begin{tabular}{|c|c|c|c|c|c|c|c|c|}
\hline
$a_1$ &  $a_2$ &  $a_3$ &  $a_4$ &  $a_5$ &  $a_6$ &  $a_7$ &  $a_8$ &  $a_9$  \\
\hline
\hline
\rowcolor{Gray}
2 & 6 & 2 & $n \geq 7$ & 3 & 2 & 2 & 2 & 2  \\
\hline
\rowcolor{Gray}
2 & 6 & 2 & $n \geq 7$ & 3 & 2 & 2 & 3 & 2  \\
\hline
\rowcolor{Gray}
2 & 6 & 2 & $n \geq 7$ & 3 & 2 & 2 & 4 & 2  \\
\hline
\rowcolor{Gray}
2 & 6 & 2 & $n \geq 7$ & 3 & 2 & 2 & 5 & 2  \\
\hline
2 & 3 & 2 & 4 & 6 & 2 & 2 & 2 & 2 \\
\hline
2 & 3 & 2 & 4 & 6 & 2 & 2 & 2 & 3 \\
\hline
2 & 3 & 2 & 5 & 6 & 2 & 2 & 2 & 2 \\
\hline
2 & 3 & 2 & 5 & 6 & 2 & 2 & 2 & 3 \\
\hline
\rowcolor{Gray}
2 & 3 & 2 & $n \geq 6$ & 6 & 2 & 2 & 2 & 2 \\
\hline
2 & 3 & 2 & 3 & 6 & 3 & 2 & 2 & 2 \\
\hline
2 & 3 & 2 & 3 & 6 & 3 & 2 & 2 & 3 \\
\hline
2 & 3 & 2 & 3 & 6 & 3 & 2 & 2 & 4 \\
\hline
2 & 3 & 2 & 3 & 6 & 3 & 2 & 2 & 5 \\
\hline
2 & 3 & 2 & 4 & 6 & 3 & 2 & 2 & 2 \\
\hline
2 & 3 & 2 & 4 & 6 & 3 & 2 & 2 & 3 \\
\hline
2 & 3 & 2 & 5 & 6 & 3 & 2 & 2 & 2 \\
\hline
2 & 3 & 2 & 5 & 6 & 3 & 2 & 2 & 3 \\
\hline
\rowcolor{Gray}
2 & 3 & 2 & $n \geq 6$ & 6 & 3 & 2 & 2 & 2 \\
\hline
\end{tabular}
\quad
\begin{tabular}{|c|c|c|c|c|c|c|c|c|}
\hline
$a_1$ &  $a_2$ &  $a_3$ &  $a_4$ &  $a_5$ &  $a_6$ &  $a_7$ &  $a_8$ &  $a_9$  \\
\hline
\hline
2 & 3 & 2 & 2 & 6 & 4 & 2 & 2 & 2 \\
\hline
2 & 3 & 2 & 2 & 6 & 4 & 2 & 2 & 3 \\
\hline
2 & 3 & 2 & 3 & 6 & 4 & 2 & 2 & 2 \\
\hline
2 & 3 & 2 & 3 & 6 & 4 & 2 & 2 & 3 \\
\hline
2 & 3 & 2 & 4 & 6 & 4 & 2 & 2 & 2 \\
\hline
2 & 3 & 2 & 4 & 6 & 4 & 2 & 2 & 3 \\
\hline
2 & 3 & 2 & 5 & 6 & 4 & 2 & 2 & 2 \\
\hline
2 & 3 & 2 & 5 & 6 & 4 & 2 & 2 & 3 \\
\hline
\rowcolor{Gray}
2 & 3 & 2 & $n \geq 6$ & 6 & 4 & 2 & 2 & 2 \\
\hline
2 & 3 & 2 & 2 & 6 & 5 & 2 & 2 & 2 \\
\hline
2 & 3 & 2 & 2 & 6 & 5 & 2 & 2 & 3 \\
\hline
2 & 3 & 2 & 3 & 6 & 5 & 2 & 2 & 2 \\
\hline
2 & 3 & 2 & 3 & 6 & 5 & 2 & 2 & 3 \\
\hline
2 & 3 & 2 & 4 & 6 & 5 & 2 & 2 & 2 \\
\hline
2 & 3 & 2 & 4 & 6 & 5 & 2 & 2 & 3 \\
\hline
2 & 3 & 2 & 5 & 6 & 5 & 2 & 2 & 2 \\
\hline
2 & 3 & 2 & 5 & 6 & 5 & 2 & 2 & 3 \\
\hline
\rowcolor{Gray}
2 & 3 & 2 & $n \geq 6$ & 6 & 5 & 2 & 2 & 2 \\
\hline
\end{tabular}

\subsection{$[2,4,4]$ cusp}

\subsubsection{$a_3=3$}

\begin{center}
\begin{tabular}{|c|c|c|c|c|c|c|c|c|}
\hline
$a_1$ &  $a_2$ &  $a_3$ &  $a_4$ &  $a_5$ &  $a_6$ &  $a_7$ &  $a_8$ &  $a_9$  \\
\hline
\hline
2 & 4 & 3 & 5 & 4 & 2 & 2 & 2 & 2  \\
\hline
2 & 4 & 3 & 5 & 4 & 2 & 2 & 2 & 3  \\
\hline
2 & 4 & 3 & 5 & 4 & 2 & 2 & 3 & 2  \\
\hline
2 & 4 & 3 & 5 & 4 & 2 & 2 & 3 & 3  \\
\hline
2 & 4 & 3 & 5 & 4 & 2 & 3 & 2 & 2  \\
\hline
\end{tabular}
\end{center}

\subsubsection{$a_3=2$}

\begin{tabular}{|c|c|c|c|c|c|c|c|c|}
\hline
$a_1$ &  $a_2$ &  $a_3$ &  $a_4$ &  $a_5$ &  $a_6$ &  $a_7$ &  $a_8$ &  $a_9$  \\
\hline
\hline
2 & 4 & 2 & 5 & 4 & 2 & 2 & 2 & 2  \\
\hline
2 & 4 & 2 & 5 & 4 & 2 & 2 & 2 & 3  \\
\hline
2 & 4 & 2 & 5 & 4 & 2 & 2 & 3 & 2  \\
\hline
2 & 4 & 2 & 5 & 4 & 2 & 2 & 3 & 3  \\
\hline
2 & 4 & 2 & 5 & 4 & 2 & 3 & 2 & 2  \\
\hline
\rowcolor{Gray}
2 & 4 & 2 & $n \geq 6$ & 4 & 2 & 2 & 2 & 2  \\
\hline
\rowcolor{Gray}
2 & 4 & 2 & $n \geq 6$ & 4 & 2 & 2 & 3 & 2  \\
\hline
2 & 4 & 2 & 3 & 4 & 3 & 2 & 2 & 2  \\
\hline
2 & 4 & 2 & 3 & 4 & 3 & 2 & 2 & 3  \\
\hline
2 & 4 & 2 & 3 & 4 & 3 & 2 & 2 & 4  \\
\hline
2 & 4 & 2 & 3 & 4 & 3 & 2 & 2 & 5  \\
\hline
2 & 4 & 2 & 3 & 4 & 3 & 2 & 3 & 2  \\
\hline
\end{tabular}
\quad
\begin{tabular}{|c|c|c|c|c|c|c|c|c|}
\hline
$a_1$ &  $a_2$ &  $a_3$ &  $a_4$ &  $a_5$ &  $a_6$ &  $a_7$ &  $a_8$ &  $a_9$  \\
\hline
\hline
2 & 4 & 2 & 3 & 4 & 3 & 3 & 2 & 2  \\
\hline
2 & 4 & 2 & 4 & 4 & 3 & 2 & 2 & 2  \\
\hline
2 & 4 & 2 & 4 & 4 & 3 & 2 & 2 & 3  \\
\hline
2 & 4 & 2 & 4 & 4 & 3 & 2 & 3 & 2  \\
\hline
2 & 4 & 2 & 4 & 4 & 3 & 3 & 2 & 2  \\
\hline
2 & 4 & 2 & 5 & 4 & 3 & 2 & 2 & 2  \\
\hline
2 & 4 & 2 & 5 & 4 & 3 & 2 & 2 & 3  \\
\hline
2 & 4 & 2 & 5 & 4 & 3 & 2 & 3 & 2  \\
\hline
2 & 4 & 2 & 5 & 4 & 3 & 3 & 2 & 2  \\
\hline
\rowcolor{Gray}
2 & 4 & 2 & $n \geq 6$ & 4 & 3 & 2 & 2 & 2 \\
\hline
\rowcolor{Gray}
2 & 4 & 2 & $n \geq 6$ & 4 & 3 & 2 & 3 & 2 \\
\hline
\end{tabular}

\subsection{$[3,3,3]$ cusp}

\begin{tabular}{|c|c|c|c|c|c|c|c|c|}
\hline
$a_1$ &  $a_2$ &  $a_3$ &  $a_4$ &  $a_5$ &  $a_6$ &  $a_7$ &  $a_8$ &  $a_9$  \\
\hline
\hline
3 & 3 & 2 & 3 & 3 & 4 & 2 & 2 & 2  \\
\hline
3 & 3 & 2 & 3 & 3 & 4 & 2 & 2 & 3  \\
\hline
3 & 3 & 2 & 3 & 3 & 4 & 2 & 3 & 2  \\
\hline
3 & 3 & 2 & 3 & 3 & 4 & 3 & 2 & 2  \\
\hline
3 & 3 & 2 & 3 & 3 & 4 & 4 & 2 & 2  \\
\hline
3 & 3 & 2 & 3 & 3 & 4 & 5 & 2 & 2  \\
\hline
3 & 3 & 2 & 3 & 3 & 5 & 2 & 2 & 2  \\
\hline
3 & 3 & 2 & 3 & 3 & 5 & 2 & 2 & 3  \\
\hline
3 & 3 & 2 & 3 & 3 & 5 & 2 & 3 & 2  \\
\hline
3 & 3 & 2 & 3 & 3 & 5 & 3 & 2 & 2  \\
\hline
3 & 3 & 2 & 3 & 3 & 5 & 4 & 2 & 2  \\
\hline
\end{tabular}
\quad
\begin{tabular}{|c|c|c|c|c|c|c|c|c|}
\hline
$a_1$ &  $a_2$ &  $a_3$ &  $a_4$ &  $a_5$ &  $a_6$ &  $a_7$ &  $a_8$ &  $a_9$  \\
\hline
\hline
3 & 3 & 2 & 3 & 3 & 5 & 5 & 2 & 2  \\
\hline
3 & 3 & 2 & 4 & 3 & 4 & 2 & 2 & 2  \\
\hline
3 & 3 & 2 & 4 & 3 & 4 & 2 & 2 & 3  \\
\hline
3 & 3 & 2 & 4 & 3 & 4 & 2 & 3 & 2  \\
\hline
3 & 3 & 2 & 4 & 3 & 5 & 2 & 2 & 2  \\
\hline
3 & 3 & 2 & 4 & 3 & 5 & 2 & 2 & 3  \\
\hline
3 & 3 & 2 & 4 & 3 & 5 & 2 & 3 & 2  \\
\hline
3 & 3 & 2 & 4 & 3 & 5 & 3 & 2 & 2  \\
\hline
3 & 3 & 2 & 5 & 3 & 5 & 2 & 2 & 2  \\
\hline
3 & 3 & 2 & 5 & 3 & 5 & 2 & 2 & 3  \\
\hline
3 & 3 & 2 & 5 & 3 & 5 & 2 & 3 & 2  \\
\hline
\end{tabular}

%%%%%%%%%
%% BIBLIOGRAPHY
%%%%%%%%%

\noindent Department of Mathematics \& Computer Science,\\
Eastern Illinois University,\\
600 Lincoln Avenue,\\
Charleston, IL 61920.\\
Email: gslakeland@eiu.edu

\noindent Department of Mathematics \& Computer Science,\\
Eastern Illinois University,\\
600 Lincoln Avenue,\\
Charleston, IL 61920.\\
Email: cgbarnett@eiu.edu

\end{document}